\newcommand{\vect}[1]{\ensuremath{\mathbf{#1}}} 
\newcommand{\card}[1]{\ensuremath{\lvert{#1}\rvert}} 
\newcommand{\minor}[3]{\ensuremath{{#1}_{{#2} \gets {#3}}}} 
\DeclareMathOperator{\ess}{ess} 
\DeclareMathOperator{\Ess}{Ess} 
\DeclareMathOperator{\gap}{gap} 
\DeclareMathOperator{\ord}{ord}
\DeclareMathOperator{\oddsupp}{oddsupp}
\newcommand{\Teta}{\ensuremath{\text{\large{\ding{100}}}}}
\theoremstyle{plain}
\newtheorem{theorem}{Theorem}[section]
\newtheorem{proposition}[theorem]{Proposition}
\newtheorem{corollary}[theorem]{Corollary}
\newtheorem{fact}[theorem]{Fact}
\theoremstyle{definition}
\newtheorem{example}[theorem]{Example}
\theoremstyle{remark}
\newtheorem{remark}[theorem]{Remark}
\begin{document}
\title{Additive decomposability of functions over abelian groups}
\author{Miguel Couceiro}
\address[M. Couceiro]{Mathematics Research Unit \\
University of Luxembourg \\
6, rue Richard Coudenhove-Kalergi \\
L-1359 Luxembourg \\
Luxembourg}
\email{miguel.couceiro@uni.lu}

\author{Erkko Lehtonen}
\address[E. Lehtonen]{Computer Science and Communications Research Unit \\
University of Luxembourg \\
6, rue Richard Coudenhove-Kalergi \\
L-1359 Luxembourg \\
Luxembourg}
\email{erkko.lehtonen@uni.lu}

\author{Tam\'as Waldhauser}
\address[T. Waldhauser]{Mathematics Research Unit \\
University of Luxembourg \\
6, rue Richard Coudenhove-Kalergi \\
L-1359 Luxembourg \\
Luxembourg
\and
Bolyai Institute \\
University of Szeged \\
Aradi v\'{e}rtan\'{u}k tere 1 \\
H-6720 Szeged \\
Hungary}
\email{twaldha@math.u-szeged.hu}

\begin{abstract}
Abelian groups are classified by the existence of certain additive decompositions of group-valued functions of several variables with arity gap $2$.
\end{abstract}
\maketitle

\section{Introduction}
\label{sect intro}

The arity gap of a function $f \colon A^n \to B$ is defined as the minimum decrease in the number of essential variables when essential variables of $f$ are identified. Up to the authors' knowledge, this notion first appeared in the 1963 paper by Salomaa~\cite{Salomaa}, where it was shown that the arity gap of every Boolean function is at most $2$. Willard~\cite{Willard} generalized this by showing that a function $f \colon A^n \to B$ defined on a finite set $A$ and depending on all of its variables has arity gap at most $2$ whenever $n > \max(\card{A}, 3)$; moreover, the arity gap of such a function equals $2$ if and only if $f$ is determined by $\oddsupp$ (see Section~\ref{sect prel} for definitions). Several papers on the topic have appeared ever since, e.g., \cite{CL2007,CLW2010b,GapA,Gappol,Shtrakov,SK2010}. A complete classification of functions according to their arity gap was presented in~\cite{CL2009}.

In a previous paper of the authors'~\cite{GapA}, unique additive decompositions of functions $f \colon A^n \to B$ were presented, assuming that $B$ has a group structure and the arity gap of $f$ is at least $3$. Similar decompositions were also proposed by Shtrakov and Koppitz~\cite{SK2010}. Further decompositions were also established in~\cite{GapA} for functions whose arity gap is $2$, but in this case the codomain $B$ was required to be a Boolean group. In the current paper,
we study similar additive decompositions of functions $f \colon A^n \to B$ into sums of functions with a smaller number of essential variables, assuming that $B$ is an abelian group. We show that such a decomposition exists for all functions $f \colon A^n \to B$ determined by $\oddsupp$ if and only if $A$ is finite and the exponent of $B$ is a power of $2$.

\section{Preliminaries}
\label{sect prel}

\subsection{Functions, essential variables, the arity gap}

Throughout this paper, let $A$ and $B$ be arbitrary sets with at least two elements. A \emph{partial function} (\emph{of several variables}) from $A$ to $B$ is a mapping $f \colon S \to B$, where $S \subseteq A^n$ for some integer $n \geq 1$, called the \emph{arity} of $f$. If $S = A^n$, then we speak of (\emph{total}) \emph{functions} (\emph{of several variables}). Functions of several variables from $A$ to $A$ are referred to as \emph{operations} on $A$.

For an integer $n \geq 1$, let $[n] := \{1, \dots, n\}$.
Let $f \colon S \to B$ ($S \subseteq A^n$) be an $n$-ary partial function and let $i \in [n]$.
We say that the $i$-th variable is \emph{essential} in $f$ (or $f$ \emph{depends} on $x_i$), if there exist tuples
\[
(a_1, \dots, a_{i-1}, a_i, a_{i+1}, \dots, a_n), (a_1, \dots, a_{i-1}, a'_i, a_{i+1}, \dots, a_n) \in S
\]
such that
\[
f(a_1, \dots, a_{i-1}, a_i, a_{i+1}, \dots, a_n) \neq f(a_1, \dots, a_{i-1}, a'_i, a_{i+1}, \dots, a_n).
\]
Variables that are not essential are called \emph{inessential.}
The cardinality of the set $\Ess f := \{i \in [n] : \text{$x_i$ is essential in $f$}\}$ is called the \emph{essential arity} of $f$ and is denoted by $\ess f$.

Let $f \colon A^n \to B$, $g \colon A^m \to B$. We say that $g$ is a \emph{simple minor} of $f$, if there is a map $\sigma \colon [n] \to [m]$ such that $g(x_1, \dots, x_m) = f(x_{\sigma(1)}, \dots, x_{\sigma(n)})$. We say that $f$ and $g$ are \emph{equivalent} if each one is a simple minor of the other.

For $i, j \in [n]$, $i \neq j$, define the \emph{identification minor} of $f \colon A^n \to B$ obtained by identifying the $i$-th and the $j$-th variable as the simple minor $\minor{f}{i}{j} \colon A^n \to B$ of $f$ corresponding to the map $\sigma \colon [n] \to [n]$, $i \mapsto j$, $\ell \mapsto \ell$ for $\ell \neq i$, i.e., $\minor{f}{i}{j}$ is given by the rule
\[
\minor{f}{i}{j}(x_1, \dots, x_n) := f(x_1, \dots, x_{i-1}, x_j, x_{i+1}, \dots, x_n).
\]

Observe that a function $g$ is a simple minor of $f$, if $g$ can be obtained from $f$ by permutation of variables, addition and deletion of inessential variables and identification of variables. Similarly, two functions are equivalent, if one can be obtained from the other by permutation of variables and addition of inessential variables.

The \emph{arity gap} of $f$ is defined as
\[
\gap f := \min_{\substack{i,j \in \Ess f \\ i \neq j}} (\ess f - \ess \minor{f}{i}{j}).
\]

Note that the definition of arity gap makes reference to essential variables only. Hence, in order to determine the arity gap of a function $f$, we may consider instead an equivalent function $f'$ that is obtained from $f$ by deleting its inessential variables. It is easy to see that in this case $\gap f = \gap f'$. Therefore, we may assume without loss of generality that every function the arity gap of which we may consider depends on all of its variables.

For general background and studies on the dependence of functions on their variables, see, e.g., \cite{Cimev1981,Cimev1986,Davies,DK,EKMM,Salomaa,Solovjev,Wernick,Yablonski}. For the simple minor relation and its variants, see, e.g., \cite{BCP,CP,EFHH,Hellerstein,Lehtonen2006,LS2011,Pippenger,Wang,Zverovich}. The notion of arity gap was considered in~\cite{CL2007,CL2009,CLW2010b,GapA,Gappol,Salomaa,Shtrakov,SK2010,Willard}, and a general classification of functions according to their arity gap was established in~\cite{CL2009}, given in terms of the notions of quasi-arity and determination by $\oddsupp$. The following explicit complete classification of Boolean functions was established in~\cite{CL2007}.

\begin{theorem}
\label{BooleanGap}
Let $f \colon \{0, 1\}^n \to \{0, 1\}$ be a Boolean function with at least two essential variables. Then\/ $\gap f = 2$ if and only if $f$ is equivalent to one of the following polynomial functions over $\mathrm{GF}(2)$:
\begin{enumerate}
\item $x_1 + x_2 + \dots + x_m + c$ for some $m \geq 2$,
\item $x_1 x_2 + x_1 + c$,
\item $x_1 x_2 + x_1 x_3 + x_2 x_3 + c$,
\item $x_1 x_2 + x_1 x_3 + x_2 x_3 + x_1 + x_2 + c$,
\end{enumerate}
where $c \in \{0,1\}$. Otherwise\/ $\gap f = 1$.
\end{theorem}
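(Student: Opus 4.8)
The plan is to work with the unique representation of each Boolean function as a multilinear polynomial over $\mathrm{GF}(2)$ (its algebraic normal form), and to split the argument according to the number $n = \ess f$ of essential variables, where we may assume that $f$ depends on all of its variables, as noted in Section~\ref{sect prel}. The sufficiency (\emph{if}) direction is a finite computation: for each of the four families one writes down $\minor{f}{i}{j}$ by substituting $x_i = x_j$ and using $x_j^2 = x_j$, and checks that every identification of two essential variables lowers the essential arity by exactly $2$. For instance, in family~(1) the terms $x_i + x_j$ cancel, leaving a sum of $m - 2$ variables, while in families (2)--(4) the quadratic monomials incident to the identified pair cancel pairwise, collapsing $f$ to one or zero essential variables. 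Since $\gap f$ is the minimum such decrease, this yields $\gap f = 2$ in each case.

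For the necessity (\emph{only if}) direction I would separate the cases $n \geq 4$ and $n \in \{2,3\}$. For $n \geq 4$ I can invoke Willard's theorem as stated in the introduction: since here $\card{A} = 2$, the hypothesis $n > \max(\card{A}, 3)$ reads $n \geq 4$, so $\gap f \leq 2$ and $\gap f = 2$ if and only if $f$ is determined by $\oddsupp$. It then remains to prove the elementary lemma that a Boolean function depending on all $n \geq 4$ variables is determined by $\oddsupp$ precisely when it equals $x_1 + \dots + x_n + c$. This holds because $\oddsupp(\vect{a})$ is controlled by the parities of the numbers of $0$'s and $1$'s among $a_1, \dots, a_n$, and with $n$ fixed both are governed by the parity of $\sum_i a_i$; hence $f$ depends only on $\sum_i a_i \bmod 2$, and the essentiality of every variable forces the coefficient to be $1$. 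Thus for $n \geq 4$ the only gap-$2$ functions are those of family~(1) with $m = n$.

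It remains to treat $n \in \{2, 3\}$ by a direct analysis of the finitely many functions involved. Organizing by the degree of the polynomial, and factoring out the additive constant $c$ (which affects neither essential variables nor the arity gap) and the action of $S_n$ permuting variables, I would compute $\ess \minor{f}{i}{j}$ for each candidate. For $n = 2$ the functions $x_1 + x_2 + c$ and $x_1 x_2 + x_1 + c$ turn out to have gap $2$, whereas $x_1 x_2 + c$ and $x_1 x_2 + x_1 + x_2 + c$ have gap $1$; for $n = 3$ one checks that exactly $x_1 + x_2 + x_3 + c$, the symmetric quadratic $x_1 x_2 + x_1 x_3 + x_2 x_3 + c$, and $x_1 x_2 + x_1 x_3 + x_2 x_3 + x_1 + x_2 + c$ survive, while every degree-$3$ function and every remaining quadratic admits a pair whose identification drops the essential arity by only $1$.

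The main obstacle is this last, $n = 3$ case: although finite, it requires careful bookkeeping of cancellations among the quadratic monomials incident to an identified pair, and one must exhibit, for each excluded function, a variable identification of decrease $1$ (certifying gap $1$ rather than $2$). To keep this manageable I would use the decomposition $f = x_i x_j p + x_i q + x_j r + s$ with $p, q, r, s$ independent of $x_i, x_j$, from which $\minor{f}{i}{j} = x_j(p + q + r) + s$; tracking which of the remaining variables, and which of the two merged variables, survive in this expression reduces the analysis to a short graph-theoretic case distinction on the quadratic part, in which the triangle (with no, or with exactly two, linear terms) is seen to be the only surviving configuration.
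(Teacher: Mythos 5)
The paper itself offers no proof of Theorem~\ref{BooleanGap}: it is quoted as a known classification established in \cite{CL2007}, so there is no internal argument to compare yours against, and your proposal must be judged on its own merits. On those terms it is a sound and essentially correct route. The sufficiency computations are right: in each of the four families every identification of essential variables lowers the essential arity by exactly $2$. For necessity with $n \geq 4$, invoking Willard's theorem (Theorem~\ref{thm:gap}, with $\card{A} = 2$ so that $n > \max(\card{A},3)$ means $n \geq 4$) is legitimate and non-circular, since Willard's result predates and does not depend on the explicit Boolean classification; and your lemma that an $n$-ary Boolean function determined by $\oddsupp$ and depending on all its variables must be $x_1 + \dots + x_n + c$ is exactly the paper's Fact~\ref{fact Boolean}, proved the way you indicate (over $\{0,1\}$, the value of $\oddsupp$ on an $n$-tuple carries the same information as the Hamming weight modulo $2$). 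This correctly confines all remaining work to $n \in \{2,3\}$, and your $n = 2$ table is accurate.

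The only place requiring care is the $n = 3$ case, and here your plan is workable but still a plan: the identity $\minor{f}{i}{j} = x_j(p+q+r) + s$ for $f = x_i x_j p + x_i q + x_j r + s$ is valid (and, usefully, covers the cubic functions, since for $n = 3$ the factor $p$ may depend on the third variable), but the finite verification must actually be executed, exhibiting for each excluded configuration a specific identification of decrease $1$. For example, for the triangle with one linear term, $f = x_1x_2 + x_1x_3 + x_2x_3 + x_1 + c$, two of the three identifications collapse $f$ to a constant (decrease $3$), and only $\minor{f}{3}{2} = x_1 + x_2 + c$ certifies $\gap f = 1$; so one cannot stop at the first identification computed. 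Conversely, for the surviving configurations all three identifications must be checked. Since this is a bounded mechanical check whose outcomes agree with the statement, I regard it as routine detail rather than a gap. A side benefit of your route is that Salomaa's bound $\gap f \leq 2$ never needs to be invoked separately: for $n \geq 4$ it follows from Willard's theorem, and for $n = 3$ it falls out of the case analysis (or from the pigeonhole observation that a ternary Boolean function that is constant under all three identifications is constant, every Boolean triple having two equal entries).
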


\subsection{Functions determined by $\oddsupp$}
\label{subsect oddsupp}

We will denote tuples by boldface letters and their components by the corresponding italic letters with subscripts, e.g., $\vect{x} = (x_1, \dots, x_n) \in A^n$. For $I \subseteq [n]$ and $\vect{x} \in A^n$, let $\vect{x}|_I \in A^I$ stand for the tuple that is obtained from $\vect{x}$ by deleting the $i$-th component of $\vect{x}$ for every $i \notin I$. More precisely, if $I = \{i_1, \dots, i_k\}$ and $i_1 < \dots < i_k$, then $\vect{x}|_I = (x_{i_1}, \dots, x_{i_k})$.

Berman and Kisielewicz~\cite{BK} introduced the following notion of a function's being determined by $\oddsupp$. Denote by $\mathcal{P}(A)$ the power set of $A$, and define the function $\oddsupp \colon \bigcup_{n \geq 1} A^n \to \mathcal{P}(A)$ by
\[
\oddsupp(a_1, \dots, a_n) :=
\{a \in A : \text{$\card{\{j \in [n] : a_j = a\}}$ is odd}\}.
\]
For $\varphi \colon \mathcal{P}( A) \to B$, let $\Teta_\varphi \colon \bigcup_{n \geq 1} A^n \to B$ be defined by $\Teta_\varphi(\vect{x}) = \varphi(\oddsupp(\vect{x}))$. A function $f \colon S \to B$ ($S \subseteq A^n$) is \emph{determined by $\oddsupp$} if $f(\vect{x})$ depends only on $\oddsupp(\vect{x})$, i.e., if there exists $\varphi \colon \mathcal{P}(A) \to B$ such that $\Teta_\varphi|_S = f$. When there is no risk of ambiguity, we will simply write $\Teta_\varphi$ instead of $\Teta_\varphi|_S$. Clearly, if $S = A^n$, then the restriction of $\varphi$ to
\[
\mathcal{P}'_n(A) = \bigl\{ S \in \mathcal{P}(A) : \card{S} \in \{n, n - 2, \ldots \} \bigr\}
\]
uniquely determines $f$ and vice versa. Thus, for finite sets $A$ and $B$, the number of functions $f \colon A^n \to B$ that are determined by $\oddsupp$ is $\card{B}^{\card{\mathcal{P}'_n(A)}}$. The following facts are straightforward to verify.

\begin{fact}
\label{fact Boolean}
The Boolean functions determined by $\oddsupp$ are exactly the affine functions (also known as linear functions in the theory of Boolean functions).
\end{fact}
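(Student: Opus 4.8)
The plan is to exploit the fact that the domain is built from the two-element set $A = \{0,1\}$, so that $\oddsupp$ carries very little information: for a tuple $\vect{x} = (x_1, \dots, x_n) \in \{0,1\}^n$, only the parities of the numbers of $0$'s and of $1$'s matter. Writing $w := \card{\{i \in [n] : x_i = 1\}}$ for the Hamming weight, I would first record that $1 \in \oddsupp(\vect{x})$ if and only if $w$ is odd, while $0 \in \oddsupp(\vect{x})$ if and only if $n - w$ is odd. Since the parity of $w$ is exactly the $\mathrm{GF}(2)$-sum $x_1 + \dots + x_n$, and since the arity $n$ is fixed, both of these membership conditions are governed by this single sum.

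Next I would make precise the resulting claim that, for a fixed arity $n$, the map $\vect{x} \mapsto \oddsupp(\vect{x})$ and the parity map $\vect{x} \mapsto x_1 + \dots + x_n$ have exactly the same level sets on $\{0,1\}^n$. Indeed, two tuples with equal $\mathrm{GF}(2)$-sum plainly have the same value under $\oddsupp$; conversely, if their sums have different parities, then they disagree already on whether $1 \in \oddsupp$, and hence have distinct $\oddsupp$-values. Consequently, $f \colon \{0,1\}^n \to \{0,1\}$ is determined by $\oddsupp$ if and only if $f$ factors through the parity map, i.e., $f(\vect{x}) = h(x_1 + \dots + x_n)$ for some $h \colon \{0,1\} \to \{0,1\}$.

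Finally I would enumerate the four possible choices of $h$ and read off the corresponding $f$: the two constant maps yield the constant functions, the identity yields $x_1 + \dots + x_n$, and the negation yields $x_1 + \dots + x_n + 1$. Thus the non-constant functions determined by $\oddsupp$ are precisely those of the form $x_1 + \dots + x_n + c$ with $c \in \{0,1\}$, which---under the standing convention that the functions we consider depend on all of their variables---are exactly the affine Boolean functions, establishing both inclusions.

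I do not anticipate a genuine obstacle, as the argument reduces to a bookkeeping of parities. The one point demanding care is the role of the convention: a general affine function such as $x_1 + x_2$, viewed as a ternary function, fails to be determined by $\oddsupp$ (the tuples $(1,0,0)$ and $(1,1,1)$ share their $\oddsupp$-value $\{1\}$ yet receive different $f$-values), so the identification with the affine functions must be read up to deletion of inessential variables, in keeping with the reduction to functions depending on all variables made earlier in the paper.
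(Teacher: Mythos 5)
Your proof is correct: the paper offers no argument for this fact (it is listed among facts that are ``straightforward to verify''), and your parity argument---showing that on $\{0,1\}^n$ the map $\vect{x} \mapsto \oddsupp(\vect{x})$ has exactly the same level sets as the $\mathrm{GF}(2)$-sum $x_1 + \dots + x_n$, so that the functions determined by $\oddsupp$ are precisely the constants and $x_1 + \dots + x_n + c$---is precisely the intended straightforward verification. Your closing caveat is also well taken and worth keeping: the word ``exactly'' in the statement must be read up to addition and deletion of inessential variables, since (as your counterexample with $x_1 + x_2$ viewed as a ternary function shows) the class of affine functions is closed under adding inessential variables while the class of functions determined by $\oddsupp$ is not.
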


\begin{fact}
\label{fact oddsupp}
A function $f \colon A^n \to B$ is determined by $\oddsupp$ if and only if $f$ is totally symmetric and $\minor{f}{2}{1}$ does not depend on $x_1$.
\end{fact}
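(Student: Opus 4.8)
The plan is to prove both implications directly from the definition of $\oddsupp$, treating the easy ``only if'' direction first and reserving the real work for a combinatorial connectivity argument in the ``if'' direction.

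For the ``only if'' direction, suppose $f = \Teta_\varphi$ for some $\varphi \colon \mathcal{P}(A) \to B$. Since $\oddsupp(\mathbf{x})$ depends only on the parities of the multiplicities of the entries of $\mathbf{x}$, it is invariant under permutations of the arguments, so $f = \varphi \circ \oddsupp$ is totally symmetric. Moreover, two equal entries $a, a$ contribute an even multiplicity and hence affect no parity, so $\oddsupp(a, a, x_3, \dots, x_n) = \oddsupp(x_3, \dots, x_n)$. Consequently $\minor{f}{2}{1}(\mathbf{x}) = f(x_1, x_1, x_3, \dots, x_n) = \varphi\bigl(\oddsupp(x_3, \dots, x_n)\bigr)$, which is independent of $x_1$, as required.

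For the ``if'' direction, the goal is to show that $\oddsupp(\mathbf{x}) = \oddsupp(\mathbf{y})$ implies $f(\mathbf{x}) = f(\mathbf{y})$; setting $\varphi(\oddsupp(\mathbf{x})) := f(\mathbf{x})$ on the image of $\oddsupp$ (and arbitrarily elsewhere) then gives a well-defined $\varphi$ with $f = \Teta_\varphi$. I would encode each $\mathbf{x} \in A^n$ by its multiplicity vector $(c_a)_{a \in A}$, where $c_a = \card{\{j \in [n] : x_j = a\}}$. Total symmetry guarantees that $f(\mathbf{x})$ depends only on this vector, and by definition $\oddsupp(\mathbf{x}) = \{a \in A : c_a \text{ is odd}\}$. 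The hypothesis on $\minor{f}{2}{1}$ asserts that $f(a, a, x_3, \dots, x_n) = f(b, b, x_3, \dots, x_n)$ for all $a, b \in A$; combined with symmetry, this means that the value of $f$ is unchanged under the move that lowers some $c_a \geq 2$ by $2$ and raises some $c_b$ with $b \neq a$ by $2$. Note that such a move preserves all parities and the total $n$.

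It therefore remains to prove the combinatorial fact that any two multiplicity vectors with the same sum $n$ and the same coordinatewise parities are connected by a sequence of such moves. I would establish this by fixing an element $a_0 \in A$ (here the standing assumption $\card{A} \geq 2$ is used to guarantee a distinct target for the moves) and driving every admissible vector to the canonical one in which each coordinate other than $a_0$ attains the minimal value permitted by its parity ($1$ if odd, $0$ if even) and $a_0$ absorbs the remainder: for each $b \neq a_0$ one repeatedly moves a pair from $b$ to $a_0$, which is legal since $c_b$ exceeds its minimum by an even amount and hence stays $\geq 2$ until the minimum is reached. Two vectors with equal parities and equal sum share the same canonical form, so they are connected, and $f$ takes the same value on both. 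The main obstacle is exactly this connectivity argument: one must verify that each pair-move is applicable (some coordinate is $\geq 2$ and a distinct target exists) and that the reduction to canonical form terminates correctly, after which the conclusion is immediate from total symmetry and the definition of $\oddsupp$.
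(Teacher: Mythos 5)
Your proof is correct. There is, however, nothing in the paper to compare it against: Fact~\ref{fact oddsupp} is stated among facts that the authors declare ``straightforward to verify,'' and no proof is given. Your write-up supplies exactly the verification that is omitted. The ``only if'' direction is the routine parity observation, and in the ``if'' direction you correctly isolate the one point that genuinely requires an argument: after passing to multiplicity vectors (legitimate by total symmetry) and translating the hypothesis on $\minor{f}{2}{1}$ into invariance of $f$ under moving a pair from one coordinate to another, one must show that any two multiplicity vectors with the same sum and the same coordinatewise parities are connected by such moves. Your reduction to a canonical form (fixing $a_0 \in A$, draining every other coordinate down to its parity minimum, with $\card{A} \geq 2$ guaranteeing a target) settles this, and since $f$ is constant along each move, both vectors give the same value of $f$, so $\varphi$ is well defined. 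The only cosmetic remark is that the statement implicitly assumes $n \geq 2$ (otherwise $\minor{f}{2}{1}$ is not defined); for $n = 1$ the fact holds vacuously since every unary function is determined by $\oddsupp$.
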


\begin{fact}
\label{fact additive}
If $(B; +)$ is an abelian group, then $\Teta_{\varphi_1 + \varphi_2} = \Teta_{\varphi_1} + \Teta_{\varphi_2}$ holds for all maps $\varphi_1, \varphi_2 \colon \mathcal{P}(A) \to B$.
\end{fact}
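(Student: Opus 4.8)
The plan is to verify the claimed identity pointwise. Both $\Teta_{\varphi_1 + \varphi_2}$ and $\Teta_{\varphi_1} + \Teta_{\varphi_2}$ are $B$-valued functions on the common domain $\bigcup_{n \geq 1} A^n$, so it suffices to show that they take the same value at every argument. I would therefore fix an arbitrary tuple $\vect{x} = (x_1, \dots, x_n) \in A^n$ (for some $n \geq 1$) and chase both sides through the definitions.

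First I would set $S := \oddsupp(\vect{x}) \in \mathcal{P}(A)$ and evaluate the left-hand side. The definition of $\Teta$ gives $\Teta_{\varphi_1 + \varphi_2}(\vect{x}) = (\varphi_1 + \varphi_2)(S)$. I would then unfold the pointwise sum of the two $B$-valued maps $\varphi_1, \varphi_2$ on $\mathcal{P}(A)$, namely $(\varphi_1 + \varphi_2)(S) = \varphi_1(S) + \varphi_2(S)$, where the addition is that of the abelian group $(B;+)$. Next, reading the definition of $\Teta$ in the other direction, and using $S = \oddsupp(\vect{x})$, I would rewrite $\varphi_1(S) = \Teta_{\varphi_1}(\vect{x})$ and $\varphi_2(S) = \Teta_{\varphi_2}(\vect{x})$. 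Finally, applying the definition of pointwise sum once more—this time to the $B$-valued functions $\Teta_{\varphi_1}$ and $\Teta_{\varphi_2}$ on $\bigcup_{n \geq 1} A^n$—I would conclude $\varphi_1(S) + \varphi_2(S) = (\Teta_{\varphi_1} + \Teta_{\varphi_2})(\vect{x})$, which is exactly the right-hand side. Since $\vect{x}$ was arbitrary, the two functions coincide.

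This is a pure unwinding of definitions, so there is no real obstacle; the entire content is bookkeeping. The one point deserving a moment's attention is that two distinct instances of pointwise addition occur—one on the domain $\mathcal{P}(A)$ and one on $\bigcup_{n \geq 1} A^n$—and the argument is precisely the observation that $\oddsupp$ mediates compatibly between them. It is perhaps worth remarking that commutativity of $B$ plays no role in the verification; the hypothesis that $(B;+)$ is an abelian group is carried along only because that is the setting in which the subsequent additive decompositions will be sought.
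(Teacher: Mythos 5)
Your proof is correct: it is exactly the pointwise unwinding of definitions that the paper leaves implicit when it declares this fact ``straightforward to verify.'' Your closing remark that commutativity of $(B;+)$ is never used is also accurate—the identity holds for any group-valued $\varphi_1,\varphi_2$—so nothing is missing.
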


It was shown by Willard \cite{Willard} that if the essential arity of a function $f \colon A^n \to B$ is sufficiently large, then $\gap f \leq 2$, and he also classified such functions according to their arity gap.

\begin{theorem}[Willard~\cite{Willard}]
\label{thm:gap}
Let $A$ be a finite set and $B$ be an arbitrary set, and assume that $f \colon A^n \to B$ depends on all of its variables and $n > \max(\card{A}, 3)$. If $f$ is determined by\/ $\oddsupp$ then\/ $\gap f = 2$. Otherwise\/ $\gap f = 1$.
\end{theorem}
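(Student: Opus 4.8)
The plan is to treat the two implications separately. Since $f$ depends on all of its variables we have $\ess f = n$, so $\gap f = n - \max_{i \neq j} \ess \minor{f}{i}{j}$; as $\minor{f}{i}{j}$ never depends on $x_i$, we always have $\ess \minor{f}{i}{j} \leq n-1$ and hence $\gap f \geq 1$. It therefore suffices to decide, in each case, whether the best identification loses one essential variable or two.

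First I would dispose of the case where $f$ is determined by $\oddsupp$, say $f = \Teta_\varphi$. Identifying any two variables makes two arguments of $\oddsupp$ coincide, and a repeated value cancels in parity; concretely $\minor{f}{i}{j}(\vect{x}) = \varphi(\oddsupp(\vect{x}|_{[n] \setminus \{i,j\}}))$, so $\minor{f}{i}{j}$ depends on neither $x_i$ nor $x_j$, whence $\ess \minor{f}{i}{j} \leq n-2$ and $\gap f \geq 2$. For the reverse inequality I would show $\ess \minor{f}{1}{2} = n-2$. By Fact~\ref{fact oddsupp} the function $f$ is totally symmetric, so $\minor{f}{1}{2}$ is symmetric in $x_3, \dots, x_n$; if it failed to depend on $x_3$ it would depend on none of $x_3, \dots, x_n$, i.e.\ $\varphi(\oddsupp(\vect{z}))$ would be constant over all $(n-2)$-tuples $\vect{z}$. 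Here $n > \card{A}$ enters: the sets $\oddsupp(\vect{z})$ with $\card{\vect{z}} = n-2$ already exhaust $\{ S \subseteq A : \card{S} \equiv n \pmod 2 \}$, which is precisely the family of sets arising as $\oddsupp$ of an $n$-tuple; constancy of $\varphi$ there would force $f$ itself to be constant, contradicting that $f$ depends on its variables. Hence $\gap f = 2$.

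For the converse I would argue the contrapositive: if $\gap f \geq 2$ then $f$ is determined by $\oddsupp$. By Fact~\ref{fact oddsupp} this amounts to two claims, (I) $f$ is totally symmetric, and (II) $\minor{f}{2}{1}$ does not depend on $x_1$. Assuming (I), claim (II) follows by a pigeonhole argument: writing $g(x_1, x_3, \dots, x_n) = f(x_1, x_1, x_3, \dots, x_n) = \minor{f}{2}{1}$, symmetry makes $g$ symmetric in $x_3, \dots, x_n$, so if $g$ depended on $x_1$ then, since $\gap f \geq 2$ forces $\ess g \leq n-2$, it would depend on none of $x_3, \dots, x_n$; thus $f(b, b, c_3, \dots, c_n)$ would be independent of $c_3, \dots, c_n$. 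But $f$ depends on $x_3$, so there are tuples $\vect{u}, \vect{u}'$ differing only in the third coordinate with $f(\vect{u}) \neq f(\vect{u}')$; as $n - 1 \geq \card{A}$, two coordinates other than the third coincide, and moving that repeated pair into positions $1,2$ by symmetry gives $f(\vect{u}) = f(\vect{u}')$, a contradiction. Hence (II) holds.

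The main obstacle is claim (I), deriving total symmetry from $\gap f \geq 2$; this is exactly the step that must use $n > \max(\card{A}, 3)$, since for small $n$ there are non-symmetric functions of arity gap $2$ (e.g.\ the Boolean $x_1 x_2 + x_1$ of Theorem~\ref{BooleanGap}, which has only two essential variables). The strategy I would pursue is again pigeonhole-driven: assuming $f$ is not symmetric, fix a transposition and a tuple witnessing that the swap changes $f$, use $n > \card{A}$ to locate a repeated value among the coordinates, and thereby produce an identification $\minor{f}{i}{j}$ that still separates the witnessing tuples in each of the remaining $n-1$ coordinates, forcing $\ess \minor{f}{i}{j} = n-1$ and hence $\gap f = 1$, against $\gap f \geq 2$. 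Making this rigorous requires a careful case analysis according to where the repeated value sits relative to the transposed coordinates, the boundary case $n = \card{A} + 1$ (where the coordinates outside a fixed pair need not repeat) being the delicate one, and this bookkeeping is where I expect the real work to lie. Once (I) is in place, the contrapositive gives $\gap f \leq 1$ whenever $f$ is not determined by $\oddsupp$, and together with $\gap f \geq 1$ this yields $\gap f = 1$, completing the dichotomy.
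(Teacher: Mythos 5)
The paper offers no proof of this theorem --- it is imported wholesale from Willard's paper --- so your attempt has to stand alone, and it does not. Your first half is correct and complete: identifying $x_i$ with $x_j$ makes a value occur twice, so $\oddsupp$ of the identified tuple equals $\oddsupp(\vect{x}|_{[n]\setminus\{i,j\}})$, giving $\ess\minor{f}{i}{j}\leq n-2$ for every pair; and your use of $n>\card{A}$ (the images of $\oddsupp$ on $(n-2)$-tuples and on $n$-tuples coincide, so constancy of $\varphi$ on the former would make $f$ constant) correctly yields $\ess\minor{f}{1}{2}=n-2$, hence $\gap f=2$. The converse, however, is where the entire difficulty of Willard's theorem sits, and you do not prove it: your claim (I), that $\gap f\geq 2$ forces total symmetry, is presented only as a strategy, with the admission that the required case analysis ``is where I expect the real work to lie.'' A plan is not a proof. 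Moreover, the plan as described is not obviously viable: to conclude $\ess\minor{f}{i}{j}=n-1$ you must exhibit, for \emph{each} of the $n-1$ variables of $\minor{f}{i}{j}$, a pair of tuples witnessing dependence; arranging that the identification ``still separates the witnessing tuples'' coming from one asymmetry gives dependence in one coordinate at best, so even the endpoint of the outline does not follow from what is written.

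There is also a concrete false step in your derivation of (II) from (I). You assert that since $n-1\geq\card{A}$, two coordinates of $\vect{u}$ other than the third must coincide; this pigeonhole needs $n-1>\card{A}$, and the hypothesis permits $n=\card{A}+1$ (e.g.\ $\card{A}=3$, $n=4$), in which case the $n-1$ coordinates off position $3$ may be pairwise distinct. Then every repetition in $\vect{u}$ involves $u_3$, and your symmetry argument only gives $f(\vect{u})=\phi(u_3)$ and $f(\vect{u}')=\phi(u'_3)$ with $u_3\neq u'_3$, where $\phi(b)$ denotes the common value $f(b,b,c_3,\dots,c_n)$ --- no contradiction. This particular step is repairable without witnesses at all: since $n\geq 4$, total symmetry gives $\phi(b)=f(b,b,c,c,d_5,\dots,d_n)=f(c,c,b,b,d_5,\dots,d_n)=\phi(c)$ for all $b,c\in A$, so $\phi$ is constant and $\minor{f}{2}{1}$ cannot depend on $x_1$. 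But this repair is local; the main gap --- claim (I), equivalently the implication that a function not determined by $\oddsupp$ has $\gap f=1$ --- remains entirely open in your write-up.
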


If $B$ is a Boolean group (i.e., an abelian group of exponent $2$), then functions $f$ with $\gap f \geq 2$ can be characterized by the existence of certain additive decompositions. Here we present one of the main results of \cite{GapA} in the case $n > \max(\card{A}, 3)$. In this case, by Theorem~\ref{thm:gap}, $\gap f \geq 2$ if and only if $f$ is determined by $\oddsupp$.

\begin{theorem}[\cite{GapA}]
\label{thm oddsupp existence}
Let $(B; +)$ be a Boolean group, and let $f \colon A^n \to B$ be determined by $\oddsupp$. Then there exists a map $\varphi \colon \mathcal{P}'_n(A) \to B$ such that
\begin{equation}
f(\vect{x}) = \sum_{i = 1}^{\left\lfloor \frac{n}{2} \right\rfloor} \sum_{\substack{I \subseteq [n] \\ \card{I} = n - 2i}}\Teta_\varphi(\vect{x}|_I).
\label{eq fitilde}
\end{equation}
\end{theorem}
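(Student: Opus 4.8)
The plan is to exhibit the decomposition with $\varphi$ taken to be the restriction to $\mathcal{P}'_n(A)$ of a map representing $f$, and then to verify the required equality by a direct count that collapses in characteristic~$2$. As explained just before the statement, we work in the regime $n > \max(\card{A}, 3)$, so in particular $n > \card{A}$ with $A$ finite; hence, by the pigeonhole principle, every tuple $\vect{x} \in A^n$ has at least one repeated entry. Since $f$ is determined by $\oddsupp$, there is a map with $f = \Teta_\varphi$, and I would fix this $\varphi$, restricted to $\mathcal{P}'_n(A)$ (note the right-hand side only ever evaluates $\varphi$ at sets $\oddsupp(\vect{x}|_I)$ of size at most $\card{A} < n$, so the values of $\varphi$ on $n$-element sets are immaterial). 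The whole theorem then reduces to the purely combinatorial claim that, for this (indeed for any) $\varphi$ and every $\vect{x}$ having a repeated entry,
\[
\sum_{i = 1}^{\left\lfloor \frac{n}{2} \right\rfloor} \sum_{\substack{I \subseteq [n] \\ \card{I} = n - 2i}} \Teta_\varphi(\vect{x}|_I) = \Teta_\varphi(\vect{x}) = f(\vect{x}).
\]

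To prove this identity I would first restore the top term $I = [n]$, i.e.\ the case $i = 0$. Setting $\Sigma(\vect{x}) := \sum_{I \subseteq [n],\ n - \card{I} \text{ even}} \Teta_\varphi(\vect{x}|_I)$, the left-hand side above is precisely $\Sigma(\vect{x}) - \Teta_\varphi(\vect{x})$, which since $B$ has exponent~$2$ equals $\Sigma(\vect{x}) + \Teta_\varphi(\vect{x})$. Thus it suffices to show $\Sigma(\vect{x}) = 0$. Grouping the subsets $I$ according to the value $S = \oddsupp(\vect{x}|_I)$ and using Fact~\ref{fact additive}, I would write $\Sigma(\vect{x}) = \sum_{S} M(S)\, \varphi(S)$, where $M(S)$ denotes, modulo~$2$, the number of subsets $I$ with $n - \card{I}$ even and $\oddsupp(\vect{x}|_I) = S$. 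Everything now rests on computing $M(S)$.

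This coefficient computation is the crux and the step I expect to be the main obstacle. Let $a_1, \dots, a_r$ be the distinct entries of $\vect{x}$, occurring with multiplicities $m_1, \dots, m_r$, so that $\sum_t m_t = n$. Choosing $I$ amounts to choosing, for each $t$, how many $c_t \in \{0, \dots, m_t\}$ of the occurrences of $a_t$ to keep, in $\binom{m_t}{c_t}$ ways; then $\oddsupp(\vect{x}|_I) = \{a_t : c_t \text{ odd}\}$ and $\card{I} = \sum_t c_t \equiv \card{S} \pmod 2$. Consequently the constraint ``$n - \card{I}$ even'' holds for all such $I$ at once when $\card{S} \equiv n \pmod 2$ and for none otherwise, so it imposes nothing beyond fixing the parity of $\card{S}$. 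For $S \subseteq \{a_1, \dots, a_r\}$ of the correct parity, the elementary fact that, for $m \geq 1$, the sum of $\binom{m}{c}$ over all $c$ of a prescribed parity equals $2^{m-1}$ then yields
\[
M(S) = \prod_{t=1}^{r} 2^{\,m_t - 1} = 2^{\,n - r},
\]
while $M(S) = 0$ for every other $S$. Since $\vect{x}$ has a repeated entry, $r < n$, so $2^{n-r}$ is even; as $B$ has exponent~$2$, each term $M(S)\,\varphi(S)$ vanishes and $\Sigma(\vect{x}) = 0$. This establishes the identity, and with the $\varphi$ chosen above it gives $f = \Teta_\varphi$ equal to the claimed sum. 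Once the parity bookkeeping (that $\card{I} \equiv \card{S}$) and the binomial-sum factorization are in place, the exponent-$2$ hypothesis is exactly what forces the leading sum to cancel and the whole expression to collapse to $\Teta_\varphi$.
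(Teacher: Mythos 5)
Your proof is correct, but note that it cannot be measured against ``the paper's own proof'' in a literal sense: Theorem~\ref{thm oddsupp existence} is imported from \cite{GapA} and is stated here without proof, so what you have produced is an independent argument. Two remarks on its soundness before the comparison. First, you are right that the hypothesis $n > \max(\card{A}, 3)$ must be read into the statement from the sentence preceding it, and your argument shows exactly why it is indispensable: without $n > \card{A}$ the identity \eqref{eq fitilde} is simply false (for $n = 2$ and $\card{A} = 3$ the right-hand side is the constant $\varphi(\emptyset)$, and for infinite $A$ the second example of Section~\ref{sect general} gives a function determined by $\oddsupp$ that is not even $(n-1)$-decomposable). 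Second, a small quibble: the grouping of terms by the value $S = \oddsupp(\vect{x}|_I)$ is justified by mere rearrangement and collection of equal summands in an abelian group, not by Fact~\ref{fact additive}, which says something else. As for the route itself: you prove the formula by direct verification --- restore the missing $I = [n]$ term, group the subsets $I$ of the correct parity by $S = \oddsupp(\vect{x}|_I)$ using $\card{I} \equiv \card{S} \pmod 2$, and count $2^{m_1 - 1} \cdots 2^{m_r - 1} = 2^{n - r}$ sets $I$ for each admissible $S$, which is even because the pigeonhole principle forces $r < n$. This counting step is in fact the same trick the paper uses inside its proof of Theorem~\ref{thm decomp+} (where the number of sets $J$ with $\oddsupp(\vect{0}_J^{\vect{a}}) = S$ is computed as $2^{\card{I} - t}$), but deployed for a different purpose: you verify the specific symmetric formula \eqref{eq fitilde} directly, and in doing so you exhibit the witness $\varphi$ concretely as the restriction of the map that determines $f$. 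By contrast, when the paper proves its own refined versions of this decomposition (Theorems~\ref{thm decomp odd} and~\ref{thm decomp even}), existence is obtained indirectly, by a cardinality argument showing that $\varphi \mapsto g_\varphi$ is an injection between equinumerous finite sets; that route requires $B$ to be finite (removed afterwards in Remark~\ref{rem infinite B}) and leans on the mod-$2$ binomial identities of Theorems~\ref{thm binom SW} and~\ref{thm binom trivi}. Your approach buys elementarity, uniformity in $B$, and an explicit $\varphi$; the paper's heavier machinery buys what your cancellation argument cannot: uniqueness of $\varphi$ and the sharper weighted decompositions with coefficients $\binom{i-1}{t}$.
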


Equation~\eqref{eq fitilde} expresses the fact that every function $f \colon A^n \to B$ with large enough essential arity and $\gap f = 2$ is decomposable into a sum of essentially at most $(n - 2)$-ary functions. This fact is the starting point of the current paper. We will prove in Section~\ref{sect general} that such decompositions exist not only when $B$ is a Boolean group, but also whenever $B$ is a group whose exponent is a power of $2$. In fact, we will show that in this case there is a decomposition into functions with bounded essential arity, where the bound does not depend on $n$. We will also see that if the exponent of $B$ is not a power of $2$, then such a decomposition does not always exist, not even a decomposition into $(n - 1)$-ary functions. In Section~\ref{sect Boolean} we focus on Boolean groups $B$, and we provide a concrete decomposition of a very special symmetric form, which is also unique.

Any set $B$ can embedded into a Boolean group, e.g., into $\mathcal{P}(B)$ with the symmetric difference operation. Then we can regard any function $f \colon A^n \to B$ as a function from $A^n$ to $\mathcal{P}(B)$, and we can apply the results of Section~\ref{sect Boolean} to this function. We illustrate this for the case $A = B = \mathbb{Z}_3$ in Section~\ref{sect Z3}. Here we obtain decompositions involving a strange mixture of the field operations on $\mathbb{Z}_3$ and the symmetric difference operation, but we will see that they can be always computed within $B$, without the need of working in the extension $\mathcal{P}(B)$.

\subsection{Binomial coefficients}
\label{subsect binom}

We shall make use of the following combinatorial results.

\begin{theorem}[Shattuck, Waldhauser~\cite{SW}]
\label{thm binom SW}
For all nonnegative integers $m$, $t$ with $0 \leq t \leq \frac{m}{2} - 1$, the following identity holds:
\[
\sum_{i = t + 1}^{\left\lfloor \frac{m}{2} \right\rfloor} \binom{m}{2i} \binom{i - 1}{t}
= 2^{m - 2t - 1} \sum_{k = 0}^{\left\lfloor \frac{t}{2} \right\rfloor} \binom{m - 3 - t - 2k}{t - 2k} + (-1)^{t + 1}.
\]
\end{theorem}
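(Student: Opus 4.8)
The plan is to convert the double-indexed left-hand side into a single coefficient extraction from a rational generating function, and then read off the closed form by partial fractions. Throughout I would write $\binom{n}{r}$ for the generalized binomial coefficient (a polynomial in $n$ for each fixed $r \geq 0$), so that conventions such as $\binom{-1}{0} = 1$ are in force; keeping these consistent will turn out to be the crux.

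First I would use $\binom{i-1}{t} = [x^t](1+x)^{i-1}$ to rewrite $\sum_i \binom{m}{2i}\binom{i-1}{t}$. Including the spurious term $i = 0$ (which contributes $\binom{-1}{t} = (-1)^t$) and summing over all $m$, consider the bivariate generating function $\Phi(x,y) = \sum_{m \geq 0} \bigl( \sum_{i \geq 0} \binom{m}{2i}(1+x)^{i-1} \bigr) y^m$. Since $\sum_i \binom{m}{2i} w^{2i} = \tfrac{1}{2}((1+w)^m + (1-w)^m)$, setting $w = \sqrt{1+x}$ and summing the resulting linear recurrence in $m$ gives $\Phi(x,y) = \frac{1-y}{(1+x)(1-2y-xy^2)}$. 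Expanding $\frac{1}{1+x} = \sum_\ell (-1)^\ell x^\ell$ and $\frac{1}{1-2y-xy^2} = \sum_j \frac{x^j y^{2j}}{(1-2y)^{j+1}}$ and collecting the coefficient of $x^t$ leaves a finite geometric sum in $j$ that telescopes; after subtracting the spurious $(-1)^t$ I expect to reach the clean reduction
\[
\sum_{i=t+1}^{\lfloor m/2 \rfloor} \binom{m}{2i}\binom{i-1}{t} = [y^m] \frac{y^{2t+2}}{(1-2y)^{t+1}(1-y)}.
\]

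Next I would extract this coefficient by partial fractions of $\frac{1}{(1-2y)^{t+1}(1-y)}$. The simple pole at $y = 1$ has residue $(-1)^{t+1}$, which accounts precisely for the additive term $(-1)^{t+1}$ in the statement. The principal part at $y = 1/2$ works out to $\sum_{s=1}^{t+1} 2(-1)^{t+1+s}(1-2y)^{-s}$; reading off $[y^{m-2t-2}]$ and reindexing via $s = t+1-r$ then produces $2^{m-2t-1} \sum_{r=0}^{t} (-1)^r \binom{m-t-2-r}{t-r}$, so the whole coefficient equals $2^{m-2t-1}\sum_{r=0}^t (-1)^r\binom{m-t-2-r}{t-r} + (-1)^{t+1}$.

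It then remains to identify the alternating sum with the sum in the statement, i.e.\ to prove $\sum_{r=0}^{t} (-1)^r \binom{m-t-2-r}{t-r} = \sum_{k=0}^{\lfloor t/2 \rfloor} \binom{m-3-t-2k}{t-2k}$. Writing $N = m-t-2$ and forming the generating function in a fresh variable $z$, both sides are coefficients of $z^t$; swapping the order of summation and using $\sum_u \binom{M}{u} z^u = (1+z)^M$ (valid as a formal power series for every integer $M$) collapses each side to the same geometric series $\frac{(1+z)^{N+1}}{1+2z}$, which settles the identity. The main obstacle, and the point demanding the most care, is the bookkeeping of the boundary contributions: the spurious $i = 0$ term, the residue at $y = 1$, and the negative-top binomials $\binom{M}{u}$ with $M < 0$ all conspire to produce the single correction term $(-1)^{t+1}$, and one must keep the generalized-binomial conventions consistent across all of the generating-function manipulations for these contributions to cancel and combine correctly.
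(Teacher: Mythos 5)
Your proposal is correct, but note first that the paper contains no proof of this statement at all: Theorem~\ref{thm binom SW} is quoted from Shattuck and Waldhauser~\cite{SW}, where it is proved combinatorially by the ``method of last squares'' (weight-enumerating square-and-domino tilings and conditioning on the last square), in the same double-counting spirit in which the paper proves the companion Theorem~\ref{thm binom trivi}. Your route is genuinely different: a purely formal generating-function computation. I checked it line by line and it goes through. The kernel $\Phi(x,y) = \frac{1-y}{(1+x)(1-2y-xy^2)}$ is correct; extracting $[x^t]$, the finite geometric sum in $j$ telescopes (since $1 + \frac{y^2}{1-2y} = \frac{(1-y)^2}{1-2y}$) to $[x^t]\Phi = \frac{(-1)^t}{1-y} + \frac{y^{2t+2}}{(1-y)(1-2y)^{t+1}}$, so the spurious $i=0$ contribution $(-1)^t$ cancels exactly and your clean reduction holds; the principal-part coefficients $2(-1)^{t+1-s}$ at $y = \frac12$ (your $2(-1)^{t+1+s}$, the same thing) yield $2^{m-2t-1}\sum_{r=0}^{t}(-1)^r\binom{m-t-2-r}{t-r}$ as claimed; and the final identification with $\sum_{k}\binom{m-3-t-2k}{t-2k}$ follows from your collapse of both sides to $\frac{(1+z)^{N+1}}{1+2z}$ (it can also be seen more elementarily by pairing the terms $r = 2k$ and $r = 2k+1$ and applying Pascal's rule, the unpaired $r = t$ term for even $t$ supplying the $k = \lfloor t/2 \rfloor$ summand $\binom{N-1-t}{0}$). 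One point you should make explicit rather than bury in the bookkeeping: the hypothesis $t \leq \frac{m}{2}-1$, i.e.\ $m \geq 2t+2$, is used precisely when you read off $[y^m]$ of the $(-1)^{t+1}\,\frac{y^{2t+2}}{1-y}$ part --- for $m < 2t+2$ that coefficient is $0$, not $(-1)^{t+1}$, and the left-hand sum is empty, so the identity would fail without the hypothesis. As for what each approach buys: yours is mechanical, self-contained, and makes the anomalous additive term $(-1)^{t+1}$ appear naturally as the residue of the simple pole at $y=1$, while the proof in~\cite{SW} explains the identity structurally (both sides count the same weighted objects) and is stylistically parallel to the paper's one-line combinatorial proof of Theorem~\ref{thm binom trivi}.
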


\begin{theorem}
\label{thm binom trivi}
For all nonnegative integers $m$, $t$ with $0 \leq t \leq \frac{m - 1}{2}$ the following identity holds:
\[
\sum_{k = t + 1}^{\left\lfloor \frac{m + 1}{2} \right\rfloor} \binom{m}{2k - 1} \binom{2k - 1}{2t}
= \binom{m}{2t} 2^{m - 2t - 1}.
\]
\end{theorem}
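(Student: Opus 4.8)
The plan is to reduce the asserted identity to two textbook facts: the subset-of-a-subset identity and the fact that the odd-indexed binomial coefficients in a fixed row sum to a power of two. The whole argument is a sequence of reindexings, which explains the label ``trivi''.

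First I would reindex by setting $j = 2k - 1$. As $k$ runs from $t+1$ to $\lfloor (m+1)/2 \rfloor$, the variable $j$ runs exactly over the \emph{odd} integers with $2t+1 \le j \le m$, so the left-hand side becomes $\sum \binom{m}{j}\binom{j}{2t}$, summed over odd $j$ in that range. The choice of lower limit is harmless: terms with $k \le t$ would contribute nothing, since $\binom{2k-1}{2t} = 0$ whenever $2k-1 < 2t$.

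Next I would apply the subset-of-a-subset identity $\binom{m}{j}\binom{j}{2t} = \binom{m}{2t}\binom{m-2t}{j-2t}$, valid for every $j$, in order to factor $\binom{m}{2t}$ out of the sum. Substituting $\ell = j - 2t$ and using that $2t$ is even, the parity is preserved ($\ell$ is odd precisely when $j$ is odd) and the upper endpoint transforms correctly ($\ell \le m - 2t$ precisely when $j \le m$). Hence the remaining sum is $\sum_{\ell \text{ odd},\, 1 \le \ell \le m - 2t} \binom{m-2t}{\ell}$.

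Finally I would invoke the elementary identity $\sum_{\ell \text{ odd}} \binom{N}{\ell} = 2^{N-1}$, which holds for every $N \ge 1$. Taking $N = m - 2t$, the hypothesis $t \le \frac{m-1}{2}$ guarantees $m - 2t \ge 1$, so this inner sum equals $2^{m-2t-1}$; multiplying back by the factored $\binom{m}{2t}$ produces the claimed right-hand side. There is no genuine obstacle here: the only point requiring care is the bookkeeping of parities and endpoints through the two reindexings, so that ``odd $j$'' is faithfully carried to ``odd $\ell$'' and the summation range $1 \le \ell \le m-2t$ is exactly right.
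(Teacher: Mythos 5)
Your proof is correct, and it is in substance the same as the paper's: the paper proves the identity by counting in two ways the pairs $(A,B)$ with $A \subseteq B \subseteq [m]$, $|A| = 2t$, $|B|$ odd, and your two ingredients are exactly those two counts in algebraic form --- the subset-of-a-subset identity $\binom{m}{j}\binom{j}{2t} = \binom{m}{2t}\binom{m-2t}{j-2t}$ is the recount with $A$ chosen first, and $\sum_{\ell\ \mathrm{odd}} \binom{m-2t}{\ell} = 2^{m-2t-1}$ is the count of admissible odd supersets $B \supseteq A$. Your bookkeeping of indices and parities is accurate, including the use of $t \le \frac{m-1}{2}$ to guarantee $m - 2t \ge 1$ so that the power-of-two identity applies.
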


\begin{proof}
Both sides of the identity count the number of pairs $(A, B)$, where $A \subseteq B \subseteq [m]$, $\card{A} = 2t$, and $\card{B}$ is odd.
\end{proof}

\section{The general case}
\label{sect general}

Throughout this section, unless mentioned otherwise, $A$ is a finite set with a distinguished element $0_A$ and $(B; +)$ is an arbitrary, possibly infinite abelian group with neutral element $0_B$. With no risk of ambiguity, we will omit the subscripts and will denote both $0_A$ and $0_B$ by $0$. Recall that the \emph{order} of $b \in B$, denoted by $\ord(b)$, is the smallest positive integer $n$ such that $nb = \underbrace{b + \dots + b}_{\text{$n$ times}} = 0$. If there is no such positive integer, then $\ord(b) = \infty$. If the orders of all elements of $B$ have a finite common upper bound, then the \emph{exponent} of $B$, denoted by $\exp(B)$, is the least common upper bound (equivalently, the least common multiple) of these orders. Otherwise let $\exp(B) = \infty$. Note that a Boolean group is a group of exponent $2$.

We say that a function $f \colon A^n \to B$ is \emph{$k$-decomposable} if it admits an additive decomposition $f = f_1 + \dots + f_s$, where the essential arity of each $f_i \colon A^n \to B$ is at most $k$. Moreover, we say that $f$ is \emph{decomposable} if it is $(n - 1)$-decomposable.

According to Fact~\ref{fact Boolean}, every Boolean function determined by $\oddsupp$ is $1$-decomposable, while the functions described in Theorem~\ref{thm oddsupp existence} are $(n - 2)$-de\-com\-pos\-able. Our goal in this section is to extend these results by characterizing those abelian groups $B$ which have the property that every function $f \colon A^n \to B$ determined by $\oddsupp$ is decomposable. As we will see, this is the case if and only if $\exp(B)$ is a power of $2$. Moreover, we will determine, for each such abelian group $B$, the smallest number $k$ such that every function $f \colon A^n \to B$ determined by $\oddsupp$ is $k$-decomposable.

The Taylor formula developed for finite functions by Gilezan \cite{Gilezan} provides a tool to test decomposability of functions. Although in \cite{Gilezan} the codomain $B$ was assumed to be a ring, only multiplication by $0$ and $1$ was used in the Taylor formula; hence it is valid for abelian groups as well. For self-containedness, we present here the formula with a proof (see Proposition~\ref{prop Taylor}).

For a given $\vect{x} \in A^n$ and $i \in [n]$, $a \in A$, let $\vect{x}_i^a$ denote the $n$-tuple that is obtained from $\vect{x}$ by replacing its $i$-th component by $a$. More generally, for $I \subseteq [n]$ and $\vect{a} \in A^n$, let $\vect{x}_I^{\vect{a}}$ denote the $n$-tuple that is obtained from $\vect{x}$ by replacing its $i$-th component by $a_i$ for every $i \in I$. (Observe that the components $a_i$ of $\vect{a}$ with $i \notin I$ are irrelevant in determining $\vect{x}_I^\vect{a}$.)

For any $a \in A$ and $i \in [n]$ we define the \emph{partial derivative} of $f \colon A^n \to B$ with respect to its $i$-th variable with parameter $a$ as the function $\Delta_i^a f \colon A^n \to B$ given by
\[
\Delta_i^a f(\vect{x}) = f(\vect{x}_i^a) - f(\vect{x}).
\]
Note that for each parameter $a \in A$ we have a different partial derivative of $f$ with respect to its $i$-th variable. We need the parameter $a$ because $A$ is just a set without any structure; hence we cannot define differences like $f(x + h) - f(x)$. It is easy to verify that the $i$-th variable of $f$ is inessential if and only if $\Delta_i^a f$ is identically
$0$ for some $a \in A$ (equivalently, for all $a \in A$).

Clearly, the partial derivatives are additive, i.e., $\Delta_i^a(f + g) = \Delta_i^a f + \Delta_i^a g$. Moreover, differentiations with respect to different variables commute with each other:
\begin{equation}
\Delta_i^a \Delta_j^b f(\vect{x}) = \Delta_j^b \Delta_i^a f(\vect{x}) = f(\vect{x}_{ij}^{ab}) - f(\vect{x}_i^a) - f(\vect{x}_j^b) + f(\vect{x})
\label{eq Delta2}
\end{equation}
for all $a, b \in A$, $i \neq j \in [n]$. (Here $\vect{x}_{ij}^{ab}$ is a shorthand notation for $(\vect{x}_i^a)_j^b = (\vect{x}_j^b)_i^a$.) This property allows us to define higher-order derivatives: for $I = \{i_1, \dots, i_k\} \subseteq [n]$ and $\vect{a} \in A^n$ let $\Delta_I^{\vect{a}} f = \Delta_{i_1}^{a_1} \cdots \Delta_{i_k}^{a_k} f$. (Again, the components $a_i$ ($i \notin I$) are irrelevant.) The following proposition generalizes formula~\eqref{eq Delta2} above.

\begin{proposition}
\label{prop DeltaI}
For any function $f \colon A^n \to B$, $I \subseteq [n]$ and $\vect{a} \in A^n$, we have
\[
\Delta_I^\vect{a} f(\vect{x}) = \sum_{J \subseteq I}(-1)^{\card{I \setminus J}} f(\vect{x}_J^\vect{a}).
\]
\end{proposition}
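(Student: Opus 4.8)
The plan is to prove the identity by induction on $\card{I}$. For the base case $I = \emptyset$, the operator $\Delta_\emptyset^\vect{a}$ is the empty composition, i.e.\ the identity, while the right-hand side reduces to the single term $J = \emptyset$, namely $(-1)^0 f(\vect{x}_\emptyset^\vect{a}) = f(\vect{x})$; so both sides agree. (If one prefers to begin at $\card{I} = 1$, the case $I = \{i\}$ is literally the definition $\Delta_i^a f(\vect{x}) = f(\vect{x}_i^a) - f(\vect{x})$, which matches the two-term sum over $J \subseteq \{i\}$.)

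For the inductive step I would assume the formula for a given $I \subseteq [n]$ and pick an index $k \in [n] \setminus I$. Because differentiations with respect to distinct variables commute (this is exactly what makes the higher-order derivatives $\Delta_I^\vect{a}$ well defined in the first place), I may write $\Delta_{I \cup \{k\}}^\vect{a} = \Delta_k^{a_k} \Delta_I^\vect{a}$ and apply the definition of the single-variable derivative to $g = \Delta_I^\vect{a} f$, obtaining
\[
\Delta_{I \cup \{k\}}^\vect{a} f(\vect{x}) = \Delta_I^\vect{a} f(\vect{x}_k^{a_k}) - \Delta_I^\vect{a} f(\vect{x}).
\]
Next I would substitute the induction hypothesis into each term. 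Since $k \notin I$, replacing the $k$-th coordinate by $a_k$ commutes with the substitutions indexed by $J \subseteq I$, so that $(\vect{x}_k^{a_k})_J^\vect{a} = \vect{x}_{J \cup \{k\}}^\vect{a}$. Hence the first term equals $\sum_{J \subseteq I} (-1)^{\card{I \setminus J}} f(\vect{x}_{J \cup \{k\}}^\vect{a})$ and the second equals $\sum_{J \subseteq I} (-1)^{\card{I \setminus J}} f(\vect{x}_J^\vect{a})$.

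The last step is a bookkeeping argument over subsets. Every $J' \subseteq I \cup \{k\}$ either contains $k$, in which case $J' = J \cup \{k\}$ for a unique $J \subseteq I$ with $\card{(I \cup \{k\}) \setminus J'} = \card{I \setminus J}$, or it omits $k$, in which case $J' = J$ for a unique $J \subseteq I$ with $\card{(I \cup \{k\}) \setminus J'} = \card{I \setminus J} + 1$. Matching the first family against the $\Delta_I^\vect{a} f(\vect{x}_k^{a_k})$ sum and the second family against $-\Delta_I^\vect{a} f(\vect{x})$ (the minus sign supplying the extra factor $(-1)$ recorded by the shifted exponent) reassembles precisely $\sum_{J' \subseteq I \cup \{k\}} (-1)^{\card{(I \cup \{k\}) \setminus J'}} f(\vect{x}_{J'}^\vect{a})$, which closes the induction. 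I expect the only delicate point to be this sign-and-index accounting, specifically the observation that adjoining $k$ raises the exponent of $-1$ by exactly one on the subsets that do not contain $k$; everything else is a routine reindexing.
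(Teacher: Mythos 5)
Your proof is correct and is exactly the argument the paper intends: the paper's own proof is the one-line remark that the identity follows by easy induction on $\card{I}$, and you have simply carried out that induction in full. All the details check out — the base case, the reduction $\Delta_{I \cup \{k\}}^{\vect{a}} f(\vect{x}) = \Delta_I^{\vect{a}} f(\vect{x}_k^{a_k}) - \Delta_I^{\vect{a}} f(\vect{x})$ justified by commutativity of the partial derivatives, the observation that $(\vect{x}_k^{a_k})_J^{\vect{a}} = \vect{x}_{J \cup \{k\}}^{\vect{a}}$ for $k \notin J$, and the sign bookkeeping when splitting subsets of $I \cup \{k\}$ according to whether they contain $k$.
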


\begin{proof}
Easy induction on $\card{I}$. (For $\card{I} = 2$, the identity is just \eqref{eq Delta2}.)
\end{proof}

Now we are ready to state and prove the Taylor formula for functions $f \colon A^n \to B$, which is essentially the same as Theorem~2 and Theorem~3 in \cite{Gilezan}. (Let us note that in the following considerations any fixed $n$-tuple $\vect{a} \in A^n$ could be used instead of $\vect{0}$.)

\begin{proposition}
\label{prop Taylor}
Any function $f \colon A^n \to B$ can be expressed as a sum of some of its partial derivatives at\/ $\vect{0}$:
\begin{equation}
f(\vect{x}) = \sum_{I \subseteq [n]} \Delta_I^\vect{x} f(\vect{0}).
\label{eq Taylor}
\end{equation}
\end{proposition}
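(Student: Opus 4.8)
The plan is to reduce the claim to Proposition~\ref{prop DeltaI} together with a single inclusion--exclusion cancellation. First I would apply Proposition~\ref{prop DeltaI} with parameter tuple $\vect{a} = \vect{x}$ and base point $\vect{0}$ to each summand on the right-hand side of~\eqref{eq Taylor}, obtaining
\[
\Delta_I^\vect{x} f(\vect{0}) = \sum_{J \subseteq I} (-1)^{\card{I \setminus J}} f(\vect{0}_J^\vect{x}),
\]
where $\vect{0}_J^\vect{x}$ denotes the tuple having $x_i$ in the positions $i \in J$ and $0$ in all other positions.

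Next I would substitute this into $\sum_{I \subseteq [n]} \Delta_I^\vect{x} f(\vect{0})$ and interchange the order of summation, grouping the terms according to the inner index $J$ rather than the outer index $I$. Writing $K := I \setminus J$, as $I$ ranges over all supersets of a fixed $J$ the set $K$ ranges over all subsets of $[n] \setminus J$, so the coefficient of $f(\vect{0}_J^\vect{x})$ becomes
\[
\sum_{\substack{I \subseteq [n] \\ J \subseteq I}} (-1)^{\card{I \setminus J}} = \sum_{K \subseteq [n] \setminus J} (-1)^{\card{K}}.
\]

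The key step is then to evaluate this coefficient by the elementary identity $\sum_{K \subseteq T} (-1)^{\card{K}} = 0^{\card{T}}$, which vanishes unless $T = \emptyset$. Hence every term drops out except the one indexed by $J = [n]$, whose coefficient is $1$ and whose function value is $f(\vect{0}_{[n]}^\vect{x}) = f(\vect{x})$; this yields the desired equality. I do not expect any genuine obstacle here: the only point requiring care is the bookkeeping in the summation swap, and once the terms are grouped by $J$ the cancellation is immediate from the sign-reversing involution on the subsets of a nonempty set (equivalently, from the binomial theorem applied to $(1 - 1)^{\card{T}}$).
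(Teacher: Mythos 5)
Your argument is correct and is essentially identical to the paper's own proof: both expand each $\Delta_I^\vect{x} f(\vect{0})$ via Proposition~\ref{prop DeltaI}, swap the order of summation with the substitution $K := I \setminus J$, and observe that the coefficient $\sum_{K \subseteq [n] \setminus J} (-1)^{\card{K}}$ vanishes unless $J = [n]$, leaving only $f(\vect{x})$. The bookkeeping in your summation swap matches the paper's computation step for step, so there is nothing to add.
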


\begin{proof}
Using Proposition~\ref{prop DeltaI}, we can compute the right-hand side as follows:
\[
\sum_{I \subseteq [n]} \Delta_I^{\vect{x}} f(\vect{0}) = \sum_{I \subseteq [n]} \sum_{J \subseteq I}(-1)^{\card{I \setminus J}} f(\vect{0}_J^\vect{x}).
\]
Observe that $K := I \setminus J$ can be any subset of $[n] \setminus J$. Hence
\begin{align*}
\sum_{I \subseteq [n]} \sum_{J \subseteq I} (-1)^{\card{I \setminus J}} f(\vect{0}_J^{\vect{x}})
&= \sum_{J \subseteq [n]} \sum_{K \subseteq[n] \setminus J} (-1)^{\card{K}} f(\vect{0}_J^\vect{x}) \\
&= \sum_{J \subseteq [n]}
\Bigl( \sum_{K \subseteq [n] \setminus J} (-1)^{\card{K}} \Bigr) f(\vect{0}_J^\vect{x}).
\end{align*}
Since a nonempty finite set has the same number of subsets of odd cardinality as subsets of even cardinality, the coefficient $\sum_{K \subseteq [n] \setminus J} (-1)^{\card{K}}$ of $f(\vect{0}_J^\vect{x})$ above is $0$ unless $J = [n]$. Thus the sum reduces to $f(\vect{0}_{[n]}^\vect{x}) = f(\vect{x})$, and this proves the theorem.
\end{proof}

The following proposition provides a useful criterion of decomposability.

\begin{proposition}
\label{prop decomp.crit.}
A function $f \colon A^n \to B$ is $k$-decomposable if and only if $\Delta_I^\vect{a} f(\vect{0}) = 0$ for all\/ $\vect{a} \in A^n$ and $I \subseteq [n]$ with more than $k$ elements.
\end{proposition}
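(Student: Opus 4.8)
The plan is to use the Taylor formula (Proposition~\ref{prop Taylor}) as the bridge between the decomposability of $f$ and the vanishing of its higher-order partial derivatives at $\vect{0}$. The two structural facts I will lean on are that differentiation is additive and commutes across distinct variables, and that the partial derivative $\Delta_i^a g$ vanishes identically precisely when $x_i$ is inessential in $g$ (both noted just before Proposition~\ref{prop DeltaI}).

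For the ``only if'' direction I would assume $f = f_1 + \dots + f_s$ with $\ess f_j \leq k$ for each $j$. By additivity of the partial derivatives, $\Delta_I^\vect{a} f = \sum_j \Delta_I^\vect{a} f_j$, so it suffices to show that each summand vanishes whenever $\card{I} > k$. Since $\card{\Ess f_j} \leq k < \card{I}$, the set $I$ must contain an index $i \in I \setminus \Ess f_j$, i.e.\ a variable that is inessential in $f_j$; for that index $\Delta_i^{a_i} f_j$ is identically $0$. Using commutativity of differentiation I then factor $\Delta_I^\vect{a} f_j = \Delta_{I \setminus \{i\}}^\vect{a}\bigl(\Delta_i^{a_i} f_j\bigr) = 0$, identically (hence in particular at $\vect{0}$), and summing over $j$ gives $\Delta_I^\vect{a} f(\vect{0}) = 0$.

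For the ``if'' direction I would assume $\Delta_I^\vect{a} f(\vect{0}) = 0$ for all $\vect{a} \in A^n$ and all $I$ with $\card{I} > k$, and start from the Taylor formula $f(\vect{x}) = \sum_{I \subseteq [n]} \Delta_I^\vect{x} f(\vect{0})$. Reading the hypothesis with the instantiation $\vect{a} = \vect{x}$ shows that every term indexed by an $I$ with $\card{I} > k$ is identically zero, so $f(\vect{x}) = \sum_{\card{I} \leq k} \Delta_I^\vect{x} f(\vect{0})$. Setting $g_I(\vect{x}) := \Delta_I^\vect{x} f(\vect{0})$, I would invoke Proposition~\ref{prop DeltaI} to write $g_I(\vect{x}) = \sum_{J \subseteq I} (-1)^{\card{I \setminus J}} f(\vect{0}_J^\vect{x})$; since each $\vect{0}_J^\vect{x}$ depends only on the components $x_j$ with $j \in J \subseteq I$, the function $g_I$ depends only on the variables indexed by $I$, whence $\ess g_I \leq \card{I} \leq k$. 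Thus $f = \sum_{\card{I} \leq k} g_I$ exhibits $f$ as a sum of functions of essential arity at most $k$, which is exactly $k$-decomposability.

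The argument is essentially a careful matching of the two descriptions, so I do not expect any delicate estimate or case analysis once the Taylor formula is available. The only point that needs genuine care is the interplay of quantifiers: one must check that the clause ``for all $\vect{a}$'' in the hypothesis lines up exactly with the instantiation $\vect{a} = \vect{x}$ needed to annihilate the high-order Taylor terms, and, dually, that in the ``only if'' direction the derivative vanishes \emph{identically} (not merely at $\vect{0}$), so that the two implications remain symmetric and the reduction through commutativity of differentiation is valid.
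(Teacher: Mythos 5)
Your proof is correct and follows essentially the same route as the paper: the ``if'' direction reads off a $k$-decomposition from the Taylor formula (Proposition~\ref{prop Taylor}), noting via Proposition~\ref{prop DeltaI} that $\vect{x} \mapsto \Delta_I^\vect{x} f(\vect{0})$ depends only on the variables in $I$, and the ``only if'' direction uses additivity plus the fact that each $|I| > k$ forces $I$ to contain an inessential variable of each summand. Your extra care in factoring $\Delta_I^\vect{a} f_j = \Delta_{I \setminus \{i\}}^{\vect{a}}\bigl(\Delta_i^{a_i} f_j\bigr)$ via commutativity simply makes explicit a step the paper treats as immediate.
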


\begin{proof}
Sufficiency follows directly from Proposition~\ref{prop Taylor}: clearly, the essential arity of the function $\vect{x} \mapsto \Delta_I^\vect{x} f(\vect{0})$ is at most $\card{I}$. Therefore, if $\Delta_I^\vect{x} f(\vect{0})$ vanishes whenever $\card{I} > k$, then \eqref{eq Taylor} is a decomposition into a sum of essentially at most $k$-ary functions.

For necessity, let us suppose that $f = f_1 + \dots + f_s$, where $\ess f_i \leq k$ for $i \in [s]$. If $\card{I} > k$, then $I$ contains (the index of) at least one of the inessential variables of $f_i$, hence $\Delta_I^\vect{a} f_i$ is constant $0$ for every $\vect{a} \in A^n$ and $i \in [s]$. Since $\Delta_I^\vect{a} f = \Delta_I^\vect{a} f_1 + \dots + \Delta_I^\vect{a} f_s$, we can conclude that $\Delta_I^\vect{a} f$ is constant $0$ as well. In particular, we have $\Delta_I^\vect{a} f(\vect{0}) = 0$.
\end{proof}

The following two theorems constitute the main results of this section, and they show a strong dichotomy of abelian groups with respect to the decomposability of functions determined by $\oddsupp$.

\begin{theorem}
\label{thm decomp+}
If $A$ is a finite set and $B$ is an abelian group of exponent $2^e$, then every function $f \colon A^n \to B$ determined
by $\oddsupp$ is $(\card{A} + e - 2)$-decomposable.
\end{theorem}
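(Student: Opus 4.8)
The plan is to invoke the decomposability criterion of Proposition~\ref{prop decomp.crit.}: it suffices to show that $\Delta_I^{\vect{a}} f(\vect{0}) = 0$ for every $\vect{a} \in A^n$ and every $I \subseteq [n]$ with $\card{I} > \card{A} + e - 2$, i.e., $\card{I} \geq \card{A} + e - 1$. Writing $f = \Teta_\varphi$ for some $\varphi \colon \mathcal{P}(A) \to B$ and expanding the derivative by Proposition~\ref{prop DeltaI}, we obtain
\[
\Delta_I^{\vect{a}} f(\vect{0}) = \sum_{J \subseteq I} (-1)^{\card{I \setminus J}} \varphi\bigl(\oddsupp(\vect{0}_J^{\vect{a}})\bigr),
\]
so everything reduces to a sign-weighted sum of values of $\varphi$ on the odd supports of the tuples $\vect{0}_J^{\vect{a}}$, where $\vect{0}_J^{\vect{a}}$ has $i$-th entry $a_i$ for $i \in J$ and $0$ elsewhere.

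The central computation is to understand how $\oddsupp(\vect{0}_J^{\vect{a}})$ depends on $J$. Here I would group the indices of $I$ according to the parameter value: for each $v \in A$ set $I_v = \{i \in I : a_i = v\}$ and $m_v = \card{I_v}$, so that choosing $J \subseteq I$ amounts to choosing $J_v = J \cap I_v \subseteq I_v$ for each $v$. A direct count shows that a nonzero element $v$ lies in $\oddsupp(\vect{0}_J^{\vect{a}})$ precisely when $\card{J_v}$ is odd, while $0$ lies in it precisely when $n - \sum_{v \neq 0} \card{J_v}$ is odd. The key observation is that the resulting subset of $A$ depends only on the parities $p_v := \card{J_v} \bmod 2$ for the nonzero values $v$, and in particular is completely independent of $J_0 = J \cap I_0$. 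Consequently the sum factorizes over the value classes. If $I_0 \neq \emptyset$, the factor coming from the free choice of $J_0$ is $\sum_{J_0 \subseteq I_0} (-1)^{m_0 - \card{J_0}} = 0$, so $\Delta_I^{\vect{a}} f(\vect{0}) = 0$ outright; otherwise, letting $r$ denote the number of distinct (nonzero) values occurring among the $a_i$ with $i \in I$, each value class contributes a factor $(-1)^{m_v + p_v} 2^{m_v - 1}$, giving
\[
\Delta_I^{\vect{a}} f(\vect{0}) = (-1)^{\card{I}} \, 2^{\card{I} - r} \sum_{\vect{p} \in \{0,1\}^r} (-1)^{\lvert \vect{p} \rvert} \varphi\bigl(S(\vect{p})\bigr),
\]
where $S(\vect{p}) \subseteq A$ is the odd support determined by the parity vector $\vect{p}$.

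The conclusion then comes from the exponent hypothesis. Since the $r$ distinct values are nonzero, we have $r \leq \card{A} - 1$, whence $\card{I} - r \geq \card{I} - \card{A} + 1 \geq e$ as soon as $\card{I} \geq \card{A} + e - 1$. Because $\exp(B) = 2^e$ annihilates every element of $B$, the scalar factor $2^{\card{I} - r}$ kills the entire sum, so $\Delta_I^{\vect{a}} f(\vect{0}) = 0$, and Proposition~\ref{prop decomp.crit.} finishes the proof. The main work — and the step I expect to require the most care — is the bookkeeping in the middle paragraph: verifying that $\oddsupp(\vect{0}_J^{\vect{a}})$ really is a function of the nonzero parities alone (so that $J_0$ drops out and the sum factorizes cleanly), and tracking the exact power of $2$ produced together with the sharp bound $r \leq \card{A} - 1$, since it is precisely this power of $2$ interacting with the exponent $2^e$ that yields the bound $\card{A} + e - 2$.
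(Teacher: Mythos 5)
Your proposal is correct and follows essentially the same route as the paper's proof: reduce to vanishing of the derivatives $\Delta_I^{\vect{a}} f(\vect{0})$ via Proposition~\ref{prop decomp.crit.}, expand by Proposition~\ref{prop DeltaI}, group the indices of $I$ into value classes, dispose of the case where $0$ occurs among the $a_i$ (your clean factorization over $J_0$ is just a tidier packaging of the paper's pairwise cancellation), and in the remaining case extract the factor $2^{\card{I}-r}$ with $r \leq \card{A}-1$, which is killed by the exponent $2^e$. No gaps; the bookkeeping you flagged as delicate is carried out correctly, including the observation that $\oddsupp(\vect{0}_J^{\vect{a}})$ depends only on the parities of the nonzero classes.
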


\begin{proof}
Suppose that $f = \Teta_\varphi$ for some $\varphi \colon \mathcal{P}'_n(A) \to B$. By Proposition~\ref{prop decomp.crit.}, it suffices to verify that $\Delta_I^\vect{a} f(\vect{0}) = 0$ whenever $\card{I} \geq \card{A} + e - 1$. Let $\{a_i : i \in I\} =: \{b_1, \dots, b_t\}$ ($b_i \neq b_j$ whenever $i \neq j$), and let $B_j := \{i \in I : a_i = b_j\}$. Thus $\card{B_j}$ is the number of occurrences of $b_j$ in $\vect{a}|_I$; hence $\card{B_1} + \dots + \card{B_t} = \card{I}$ and $t \leq \card{A}$. Using Proposition~\ref{prop DeltaI}, we can expand $\Delta_I^\vect{a} f(\vect{0})$ as
\begin{equation}
\Delta_I^\vect{a} f(\vect{0}) = \sum_{J \subseteq I}(-1)^{\card{I \setminus J}} f(\vect{0}_J^\vect{a}) = \sum_{J \subseteq I}(-1)^{\card{I \setminus J}} \varphi(\oddsupp(\vect{0}_J^\vect{a})).
\label{eq:ph}
\end{equation}

Let us fix a set $S \subseteq A$ that appears as $\oddsupp(\vect{0}_J^\vect{a})$ in the above sum.

Assume first that $0 \in \{b_1, \dots, b_t\}$, say $b_t = 0$.
Then $\oddsupp(\vect{0}_J^\vect{a}) = S$ if and only if $\card{J \cap B_j}$ is odd whenever $b_j \in S$ and $\card{J \cap B_j}$ is even whenever $b_j \notin S$ for $j = 1, \dots, t - 1$ (note that $J \cap B_t$ is irrelevant in determining $\vect{0}_J^\vect{a}$). Since the number of subsets of $B_t$ of even cardinality equals the number of subsets of $B_t$ of odd cardinality, it holds that the number of sets $J$ satisfying $\oddsupp(\vect{0}_J^\vect{a}) = S$ that have an even cardinality equals the number of those that have an odd cardinality. Hence, the terms corresponding to such sets $J$ will cancel each other in~\eqref{eq:ph}.

Assume now that $0 \notin \{b_1, \dots, b_t\}$. Then clearly $t \leq \card{A} - 1$. Similarly, as in the previous case, we have that $\oddsupp(\vect{0}_J^\vect{a}) = S$ if and only if $\card{J \cap B_j}$ is odd whenever $b_j \in S$ and $\card{J \cap B_j}$ is even whenever $b_j \notin S$ for $j = 1, \dots, t$. Therefore, the number of sets $J \subseteq I$ satisfying $\oddsupp(\vect{0}_J^\vect{a}) = S$ is
\[
2^{\card{B_1} - 1} \cdots 2^{\card{B_t} - 1}
= 2^{\card{B_1} + \dots + \card{B_t} - t}
= 2^{\card{I} - t}.
\]
Moreover, the parity of $\card{J}$ is determined by $S$. Therefore, all occurrences of $\varphi(S)$ in~\eqref{eq:ph} have the same sign.

By the argument above, $\Delta_I^\vect{a} f(\vect{0})$ can be written as a sum of finitely many terms of the form $\pm 2^{\card{I} - t} \varphi(S)$, where $t \leq \card{A} - 1$. Since $\card{I} \geq \card{A} + e - 1$, the coefficient $2^{\card{I} - t}$ is a multiple of $2^e$; hence $\pm 2^{\card{I} - t} \varphi(S) = 0$ independently of the value of $\varphi(S)$. We conclude that $\Delta_I^\vect{a} f(\vect{0}) = 0$, as claimed.
\end{proof}

As the following example shows, Theorem~\ref{thm decomp+} cannot be improved and the number $\card{A} + e - 2$ cannot be decreased. More precisely, for every finite set $A$ with at least two elements, for every abelian group $B$ of exponent $2^e$, and for every $n > \card{A} + e - 3$, there exists a function $f \colon A^n \to B$ that is determined by $\oddsupp$ but is not $(\card{A} + e - 3)$-decomposable.

\begin{example}
Let $A = \{0, 1, \dots, \ell\}$, and let $B$ be an arbitrary abelian group of exponent $2^e$. Fix an element $b \in B$ of order $2^e$. Let $\varphi \colon \mathcal{P}(A) \to B$ be defined by
\[
\varphi(T) =
\begin{cases}
b, & \text{if $T \supseteq A \setminus \{0\}$,} \\
0, & \text{otherwise,}
\end{cases}
\]
let $n \geq \ell + e - 1$, and let $f \colon A^n \to B$ be given by $f(\vect{x}) = \Teta_\varphi(\vect{x})$.

To see that $f$ is not $(\card{A} + e - 3)$-decomposable, by Proposition~\ref{prop decomp.crit.}, it suffices to find $I \subseteq [n]$ and $\vect{a} \in A^n$ such that $\card{I} = \card{A} + e - 2 = \ell + e - 1$ and $\Delta_I^\vect{a} f(\vect{0}) \neq 0$. To this end, let
\[
\vect{a} := (1, 2, \dots, \ell - 1, \underbrace{\ell, \dots, \ell}_{e}, \underbrace{0, \dots, 0}_{n - \ell - e + 1}),
\]
and let $I := \{1, 2, \dots, \ell + e - 1\}$. Consider the expansion of $\Delta_I^\vect{a} f(\vect{0})$ as in~\eqref{eq:ph}. We can verify that for all $J \subseteq I$,
\[
f(\vect{0}_J^\vect{a}) =
\begin{cases}
b, & \text{if $J \supseteq \{1, \dots, \ell - 1\}$ and $\card{J \cap \{\ell, \dots, \ell + e - 1\}}$ is odd,} \\
0, & \text{otherwise.}
\end{cases}
\]
From this it follows that the number of sets $J \subseteq I$ satisfying $f(\vect{0}_J^\vect{a}) = b$ is $2^{e - 1}$. Therefore, we have
\[
\Delta_I^\vect{a} f(\vect{0}) = (-1)^{e - 1} 2^{e - 1} b \neq 0,
\]
where the inequality holds because the order of $b$ is $2^e$.
\end{example}

\begin{theorem}
\label{thm decomp-}
If $A$ is a finite set with at least two elements and $B$ is an abelian group whose exponent is not a power of\/ $2$, then for each $n$ there exists a  function $f \colon A^n \to B$ determined by\/ $\oddsupp$ that is not decomposable.
\end{theorem}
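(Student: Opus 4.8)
The plan is to reduce everything to the decomposability criterion of Proposition~\ref{prop decomp.crit.}. Since being \emph{decomposable} means being $(n-1)$-decomposable, to prove that a particular $f$ fails to be decomposable it suffices to exhibit a single $\vect{a} \in A^n$ together with a set $I \subseteq [n]$ having $\card{I} > n-1$ — which forces $I = [n]$ — such that $\Delta_{[n]}^\vect{a} f(\vect{0}) \neq 0$. Thus the entire problem collapses to engineering one nonvanishing top-order partial derivative, and the only role of the hypothesis on $B$ will be to guarantee the existence of a suitable group element.

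For the construction I would fix an element $c \in A \setminus \{0\}$ (available since $\card{A} \geq 2$) and define $\varphi \colon \mathcal{P}(A) \to B$ by $\varphi(S) = b$ if $c \in S$ and $\varphi(S) = 0$ otherwise, where $b \in B$ is to be chosen at the very end. Setting $f = \Teta_\varphi$, the function is determined by $\oddsupp$ by construction and concretely returns $b$ precisely when $c$ occurs an odd number of times in its argument. Taking $\vect{a} = (c, \dots, c)$, the tuple $\vect{0}_J^\vect{a}$ carries $c$ in the positions of $J$ and $0$ elsewhere, so by Proposition~\ref{prop DeltaI} I would compute
\[
\Delta_{[n]}^\vect{a} f(\vect{0}) = \sum_{J \subseteq [n]} (-1)^{n - \card{J}} \varphi(\oddsupp(\vect{0}_J^\vect{a})) = b \sum_{\substack{0 \leq k \leq n \\ k \text{ odd}}} \binom{n}{k} (-1)^{n-k} = (-1)^{n+1} 2^{n-1} b,
\]
using that $(-1)^{n-k} = (-1)^{n+1}$ for odd $k$ and that $\sum_{k \text{ odd}} \binom{n}{k} = 2^{n-1}$.

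It then remains to pick $b$ with $2^{n-1} b \neq 0$, and this is exactly where the hypothesis enters. If no such $b$ existed, i.e.\ if $2^{n-1} b = 0$ for every $b \in B$, then every element of $B$ would have order dividing $2^{n-1}$, so $\exp(B)$ would be a finite power of $2$, contrary to assumption. Hence a suitable $b$ exists, $\Delta_{[n]}^\vect{a} f(\vect{0}) = (-1)^{n+1} 2^{n-1} b \neq 0$, and Proposition~\ref{prop decomp.crit.} yields that $f$ is not $(n-1)$-decomposable.

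The step requiring the most care is this last one, because ``$\exp(B)$ is not a power of $2$'' silently bundles together several genuinely different situations: a finite exponent with an odd prime factor, a group with an element of infinite order, and an infinite $2$-group whose element orders are unbounded powers of $2$ (such as $\mathbb{Z}_{2^\infty}$). Rather than splitting into cases, I would phrase the requirement uniformly as the single condition ``some $b$ satisfies $2^{n-1} b \neq 0$''; its contrapositive instantly equates the failure of this condition with $\exp(B)$ being a power of $2$ not exceeding $2^{n-1}$, which dovetails precisely with the positive result of Theorem~\ref{thm decomp+} and confirms the claimed dichotomy. The binomial identity is routine, though it is worth sanity-checking the boundary case $n = 1$, where the derivative is simply $f(c) - f(0) = b$.
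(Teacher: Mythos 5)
Your proof is correct, and at its core it uses the same mechanism as the paper's: expand a top-order partial derivative of a parity-type function via Proposition~\ref{prop DeltaI}, observe that it equals $\pm 2^{n-1}b$, and invoke Proposition~\ref{prop decomp.crit.}. Two differences are worth recording. First, the paper begins with the special case $A = \{0,1\}$, taking $f_0$ to be $b$ or $0$ according to the parity of the Hamming weight, and then handles general $A$ by extending $f_0$ to a function on $A^n$ determined by $\oddsupp$ and arguing that any decomposition of the extension would restrict to a decomposition of $f_0$ on $\{0,1\}^n$; you avoid this restriction step by building the witness directly on $A^n$ as the parity of the number of occurrences of a fixed $c \neq 0$, which reaches the same derivative computation slightly more directly. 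Second, and more substantively, the paper chooses $b$ to be an element whose order is not a power of $2$ and asserts that such an element exists whenever $\exp(B)$ is not a power of $2$; this assertion fails for unbounded abelian $2$-groups such as the Pr\"{u}fer group $\mathbb{Z}_{2^{\infty}}$ or $\bigoplus_{k \geq 1} \mathbb{Z}_{2^{k}}$, in which every element has order a power of $2$ although $\exp(B) = \infty$. Your uniform requirement that $2^{n-1}b \neq 0$ for the given $n$ --- which must be satisfiable, since otherwise every element's order would divide $2^{n-1}$ and $\exp(B)$ would be a power of $2$ --- covers exactly these groups, so your formulation quietly closes a small gap in the paper's own argument. (The paper's proof admits the same repair: with $n$ fixed, one only needs an element with $\ord(b) \nmid 2^{n-1}$, not one whose order fails to be a power of $2$.)
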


\begin{proof}
If the exponent of $B$ is not a power of $2$, then it has an element $b$ whose order is not a power of $2$ (possibly infinite). Let us consider first the special case $A = \{0, 1\}$. For any $\vect{x} \in A^n$ let $w(\vect{x})$ denote the \emph{Hamming weight} of $\vect{x}$, i.e., the number of $1$'s appearing in $\vect{x}$. Let $f_0 \colon A^n \to B$ be the function defined by
\[
f_0(\vect{x}) =
\begin{cases}
b, & \text{if $w(\vect{x})$ is even,} \\
0, & \text{if $w(\vect{x})$ is odd.}
\end{cases}
\]
Let us compute $\Delta_{[n]}^\vect{1} f_0(\vect{0})$ with the help of Proposition~\ref{prop DeltaI}:
\[
\Delta_{[n]}^\vect{1} f_0(\vect{0}) = \sum_{J \subseteq [n]}(-1)^{\card{[n] \setminus J}} f_0(\vect{0}_J^\vect{1}) = (-1)^n \sum_{J \subseteq [n]} (-1)^{\card{J}} f_0(\vect{0}_J^\mathbf{1}).
\]
Since $w(\vect{0}_J^\vect{1}) = \card{J}$, the above sum consists of $2^{n - 1}$ many $b$'s and $2^{n - 1}$ many $0$'s. Thus $\Delta_{[n]}^\vect{1} f_0(\vect{0}) = (-1)^n 2^{n - 1} b \neq 0$, as $\ord(b)$ does not divide $(-1)^n 2^{n - 1}$. Now Proposition~\ref{prop decomp.crit.} shows that $f_0$ is not $(n - 1)$-decomposable.

Considering the general case, let $0$ and $1$ be two distinguished elements of $A$, and let $f \colon A^n \to B$ be any function that is determined by $\oddsupp$ such that $f|_{\{0, 1\}^n} = f_0$. Then $f$ is not decomposable, since any decomposition of $f$ would give rise to a decomposition of $f|_{\{0, 1\}^n}$.
\end{proof}

\begin{corollary}
Let $A$ be a finite set with at least two elements, and $B$ be an abelian group. All functions $f \colon A^n \to B$ determined by $\oddsupp$ are decomposable if and only if the exponent of $B$ is a power of $2$.
\end{corollary}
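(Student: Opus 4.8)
The plan is to read this corollary as the clean amalgamation of the two preceding theorems, so that almost all of the work has already been done; the only genuine glue to supply is the monotonicity of $k$-decomposability in $k$, namely that an additive decomposition into essentially at most $k$-ary summands is \emph{a fortiori} a decomposition into essentially at most $k'$-ary summands whenever $k \le k'$.

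For the ``only if'' direction I would argue by contraposition, which lands exactly on Theorem~\ref{thm decomp-}. If the exponent of $B$ is not a power of $2$, then that theorem produces, for each $n$, a function $f \colon A^n \to B$ determined by $\oddsupp$ that is not $(n-1)$-decomposable, hence not decomposable. Thus the statement ``every function determined by $\oddsupp$ is decomposable'' fails, and this direction needs nothing beyond quoting Theorem~\ref{thm decomp-}. For the ``if'' direction, assume $\exp(B) = 2^e$. Theorem~\ref{thm decomp+} gives that every $f \colon A^n \to B$ determined by $\oddsupp$ is $(\card{A} + e - 2)$-decomposable; invoking the monotonicity observation above, once $\card{A} + e - 2 \le n - 1$ this upgrades to $(n-1)$-decomposability, i.e.\ decomposability.

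The point requiring the most care is the comparison of the bound $\card{A} + e - 2$ delivered by Theorem~\ref{thm decomp+} against the target $n - 1$: the upgrade to $(n-1)$-decomposability is automatic precisely in the range $n \ge \card{A} + e - 1$, which is exactly the large-arity regime (compatible with the hypothesis $n > \max(\card{A}, 3)$ of Theorem~\ref{thm:gap}) in which a decomposition into summands of bounded essential arity forces one into $(n-1)$-ary summands. I would therefore present the equivalence with this understanding, so that the two dichotomy theorems dovetail cleanly: the power-of-$2$ condition on $\exp(B)$ is both sufficient (Theorem~\ref{thm decomp+}, via monotonicity) and necessary (Theorem~\ref{thm decomp-}) for the decomposability of all $\oddsupp$-determined functions.
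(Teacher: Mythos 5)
Your proposal is correct and coincides with what the paper actually does: the corollary is stated with no separate argument, being the immediate combination of Theorem~\ref{thm decomp-} (necessity, by contraposition) and Theorem~\ref{thm decomp+} plus the trivial monotonicity of $k$-decomposability in $k$ (sufficiency). Your insistence on the regime $n \geq \card{A} + e - 1$ is not mere caution but genuinely necessary --- for $n \leq \card{A} + e - 2$ the ``if'' direction can fail outright, e.g.\ for $A = \{0,1,2\}$, $B = \mathbb{Z}_2$, $n = 2$, the function $f = \Teta_\varphi$ with $\varphi(\{1,2\}) = 1$ and $\varphi = 0$ elsewhere is determined by $\oddsupp$ yet not $1$-decomposable, since $\Delta_{\{1,2\}}^{(1,2)} f(0,0) = f(1,2) - f(1,0) - f(0,2) + f(0,0) = 1 \neq 0$; so the corollary must indeed be read for large arities, exactly as you present it (with the one caveat that this threshold is \emph{not} implied by the hypothesis $n > \max(\card{A},3)$ of Theorem~\ref{thm:gap} once $e \geq 3$, so your parenthetical ``compatible with'' should not be read as ``follows from'').
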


As the following example shows, decomposability is not guaranteed when $A$ is infinite, no matter what the exponent of $B$ is.

\begin{example}
Let $A$ be an infinite set, let $B$ be an abelian group and let $0 \neq b \in B$. Fix $n \geq 2$, and let $S := \{s_1, \dots, s_n\} \subseteq A \setminus \{0\}$ with $\card{S} = n$. Define $f \colon A^n \to B$ by the rule
\[
f(\vect{x}) =
\begin{cases}
b, & \text{if $\{x_1, \dots, x_n\} = S$,} \\
0, & \text{otherwise.}
\end{cases}
\]
It is clear that $f$ is determined by $\oddsupp$. Computing $\Delta_{[n]}^\vect{a} f(\vect{0})$ for $\vect{a} := (s_1, \dots, s_n)$ as in~\eqref{eq:ph}, we obtain $\Delta_{[n]}^\vect{a} f(\vect{0}) = b \neq 0$. Hence $f$ is not decomposable by Proposition~\ref{prop decomp.crit.}.
\end{example}

\begin{remark}
Theorem~\ref{thm oddsupp existence} asserts that if $B$ is a Boolean group and $n > \card{A}$, then every function $f \colon A^n \to B$ determined by $\oddsupp$ is $(n - 2)$-decomposable. Theorem~\ref{thm decomp+} gives a stronger result as it provides a decomposition into a sum of functions whose essential arity has an upper bound that depends only on $A$ and $B$ (and not on $n$). Theorem~\ref{thm decomp-} implies that if $\exp(B)$ is not a power of $2$, then even the weakest kind of decomposability (namely, $(n - 1)$-decomposability) fails to hold for all functions $f \colon A^n \to B$ determined by $\oddsupp$.
\end{remark}

\section{The case of Boolean groups}
\label{sect Boolean}

In this section we assume that $A$ is a finite set with a distinguished element $0$ and $(B; +)$ is a Boolean group with neutral element $0$. Applying Theorem~\ref{thm decomp+} to this case (with $e = 1$), we see that every function $f \colon A^n \to B$ determined by $\oddsupp$ is $(\card{A} - 1)$-decomposable. Here we will provide a canonical, highly symmetric decomposition of such functions and show that it is unique.

If $n > \card{A}$, then Theorem~\ref{thm oddsupp existence} provides a decomposition of $f$ into a sum of functions of essential arity at most $n - 2$. Each summand $\Teta_\varphi(\vect{x}|_I)$ is a function determined by $\oddsupp$, and if $\card{I} > \card{A}$, then we can apply Theorem~\ref{thm oddsupp existence} to decompose $\Teta_\varphi(\vect{x}|_I)$ into a sum of functions of essential arity at most $\card{I} - 2$. Repeating this process as long as we have summands of essential arity greater than $\card{A}$, we end up with an $\card{A}$-decomposition of $f$. If the parities of $\card{A}$ and $n$ are different, then this is already an $(\card{A} - 1)$-decomposition. By counting how many times a given summand $\Teta_\varphi(\vect{x}|_I)$ appears, we arrive at decomposition \eqref{eq decomp odd} given below in Theorem~\ref{thm decomp odd}. If the parities of $\card{A}$ and $n$ are equal, then we have to further decompose the summands of essential arity $\card{A}$. We then get the more refined decomposition \eqref{eq decomp even} given below in Theorem~\ref{thm decomp even}. Note that in these theorems we assume that $B$ is finite. However, as we will see in Remark~\ref{rem infinite B}, the general case can be easily reduced to the case of finite groups.

\begin{theorem}
\label{thm decomp odd}
Let $f \colon A^n \to B$, where $B$ is a finite Boolean group, $A$ is a finite set, and $n - \card{A} = 2t + 1 > 0$. Then $f$ is determined by\/ $\oddsupp$ if and only if $f$ is of the form
\begin{equation}
f(\vect{x}) = \sum_{i = t + 1}^{\left\lfloor \frac{n}{2} \right\rfloor} \sum_{\substack{I \subseteq [n] \\ \card{I} = n - 2i}} \binom{i - 1}{t} \Teta_\varphi(\vect{x}|_I),
\label{eq decomp odd}
\end{equation}
for some map $\varphi \colon \mathcal{P}'_n(A) \to B$. Moreover, $\varphi$ is uniquely determined by $f$.
\end{theorem}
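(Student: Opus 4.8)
The plan is to regard the right-hand side of \eqref{eq decomp odd} as defining a map $\Phi$ that sends each $\varphi \colon \mathcal{P}'_n(A) \to B$ to the function $f \colon A^n \to B$ given by that formula. Since $B$ has exponent $2$ and $\Teta$ is additive in its parameter (Fact~\ref{fact additive}), $\Phi$ is a homomorphism of the underlying Boolean groups, i.e.\ a $\mathrm{GF}(2)$-linear map. Because $A$ and $B$ are finite, both the domain of $\Phi$ and the set of functions $A^n \to B$ determined by $\oddsupp$ are $\mathrm{GF}(2)$-vector spaces of the same finite dimension $\card{\mathcal{P}'_n(A)} \cdot \dim_{\mathrm{GF}(2)} B$. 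Hence, once I verify that $\Phi$ takes its values among the functions determined by $\oddsupp$, it will suffice to prove that $\Phi$ is injective; injectivity will then force $\Phi$ to be a bijection onto the functions determined by $\oddsupp$, which at once yields both the existence of a representation \eqref{eq decomp odd} (the ``only if'' direction) and the uniqueness of $\varphi$.

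To see that every $f = \Phi(\varphi)$ is determined by $\oddsupp$ I would invoke Fact~\ref{fact oddsupp}, checking that $f$ is totally symmetric and that $\minor{f}{2}{1}$ does not depend on $x_1$. Symmetry is clear, since for each $i$ the inner sum ranges over all $I$ of the fixed size $n-2i$ while the coefficient $\binom{i-1}{t}$ depends only on $\card{I}$, so permuting variables merely permutes these sets. For the second condition I would evaluate $f$ on a tuple $(a,a,\vect{y})$: a set $I$ yields a summand depending on $a$ only if exactly one of $1,2$ belongs to $I$, and then the two sets $\{1\}\cup K$ and $\{2\}\cup K$ (for $K \subseteq \{3,\dots,n\}$) give the \emph{same} value $\varphi(\oddsupp(a,\vect{y}|_K))$ with the same coefficient. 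As $B$ is Boolean, these matched terms cancel in pairs, so $f(a,a,\vect{y})$ is independent of $a$, as required.

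For injectivity I would write $f = \Phi(\varphi) = \Teta_\psi$ and compute $\psi$ from $\varphi$. Fixing $S \in \mathcal{P}'_n(A)$ with $\card{S} = n-2k$, I evaluate $f$ on a tuple $\vect{x}$ carrying each element of $S$ once and a filler $c \in A \setminus S$ in the remaining $2k$ coordinates, so that $\psi(S) = f(\vect{x})$. Expanding \eqref{eq decomp odd}, every $\oddsupp(\vect{x}|_I)$ equals some $S'$ or $S' \cup \{c\}$ with $S' \subseteq S$; since membership in $\mathcal{P}'_n(A)$ fixes the parity of the cardinality, each such set $T$ has $\card{T} \le \card{S}$, so $\psi = M\varphi$ for a matrix $M$ over $\mathrm{GF}(2)$ that is triangular with respect to cardinality. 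The diagonal coefficient of $\varphi(S)$ in $\psi(S)$ is $\sum_{i=t+1}^{k}\binom{i-1}{t}\binom{2k}{2i} \bmod 2$, which by Theorem~\ref{thm binom SW} (with $m=2k$, noting $k \ge t+1$) equals $(-1)^{t+1} \equiv 1$ because the companion factor $2^{2k-2t-1}$ is even. The genuinely delicate point — and the main obstacle — is the size-preserving off-diagonal part coming from the sets $T = S' \cup \{c\}$ with $\card{T} = \card{S}$: I must show that their combined coefficient $\sum_{i=t+1}^{k}\binom{i-1}{t}\binom{2k}{2i-1} \bmod 2$ vanishes (for $t=0$ it is $\sum_i \binom{2k}{2i-1} = 2^{2k-1} \equiv 0$, and in general it is divisible by $2^{2k-2t-1}$, an odd-index companion to Theorem~\ref{thm binom SW}). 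Granting this, every diagonal block of $M$ is the identity, $M$ is unitriangular and therefore invertible over $\mathrm{GF}(2)$, and injectivity of $\Phi$ follows, completing the argument.
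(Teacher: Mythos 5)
You follow the same skeleton as the paper's proof, which reduces the theorem to three statements: (i) the number of functions determined by $\oddsupp$ equals the number of maps $\varphi \colon \mathcal{P}'_n(A) \to B$, (ii) the right-hand side of \eqref{eq decomp odd} is determined by $\oddsupp$, and (iii) the assignment $\varphi \mapsto \Phi(\varphi)$ is injective. Your dimension count is (i), and your image check is (ii) --- indeed your pairing of $\{1\}\cup K$ with $\{2\}\cup K$ is the paper's cancellation argument verbatim. The genuine divergence is in (iii). The paper argues by contradiction: if $\Phi(\varphi) \equiv 0$ with $\varphi \neq 0$, it picks $S$ \emph{minimal} with $\varphi(S) \neq 0$ and evaluates at the tuple $(s_1, \dots, s_1, s_2, \dots, s_{n-2r})$ containing $2r+1$ copies of $s_1 \in S$; then every $\oddsupp(\vect{x}|_I)$ is a \emph{subset of} $S$, minimality kills all terms except those with $\oddsupp(\vect{x}|_I) = S$, and Theorem~\ref{thm binom SW} with odd $m = 2r+1$ makes their number odd --- contradiction. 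Because your evaluation tuple instead uses a filler $c \notin S$, you must additionally control the same-cardinality sets $S' \cup \{c\}$, terms which the paper's choice of tuple never produces. (Your tacit assumptions that such a $c$ exists and that $k \geq t+1$ are both fine: $\card{S} \equiv n \not\equiv \card{A} \pmod 2$ forces $\card{S} \leq \card{A} - 1 = n - 2t - 2$.)

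This is exactly where your proof has its one real gap: the claim $\sum_{i=t+1}^{k} \binom{i-1}{t}\binom{2k}{2i-1} \equiv 0 \pmod 2$, which you verify only for $t = 0$ and otherwise defer to an unproven ``odd-index companion'' of Theorem~\ref{thm binom SW} (note it is not Theorem~\ref{thm binom trivi}, which has $\binom{2k-1}{2t}$ where you need $\binom{i-1}{t}$). As written, your argument is conditional on this. The claim is true, however, for a reason far simpler than any summation identity: \emph{every individual} coefficient $\binom{2k}{2i-1}$ is even, since $(2i-1)\binom{2k}{2i-1} = 2k\binom{2k-1}{2i-2}$ and $2i-1$ is odd. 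Hence your off-diagonal sum vanishes mod $2$ termwise --- no identity, and in particular no divisibility by $2^{2k-2t-1}$, is needed. With that one line inserted, $M$ is unitriangular over $\mathrm{GF}(2)$, $\Phi$ is injective, and your proof is complete. It then stands as a legitimate alternative to the paper's: it costs you Theorem~\ref{thm binom SW} for even $m$ plus the parity observation (where the paper uses the theorem for odd $m$, and for $m = n$ in its separate $S = \emptyset$ case, which your uniform treatment of $\card{S} = n-2k$ avoids), and it buys you slightly more than the paper gets, namely an explicit cardinality-triangular change of coordinates between $\varphi$ and the map $\psi$ with $\Phi(\varphi) = \Teta_\psi$, rather than existence and uniqueness obtained purely by counting.
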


\begin{proof}
Let $g_\varphi \colon A^n \to B$ denote the function given by the right-hand side of \eqref{eq decomp odd}. Let us note that since $n > \card{A}$ and $n - \card{A}$ is odd, $\mathcal{P}'_n(A)$ contains all subsets of $A$ whose complement has an odd number of elements. Observe also that in~\eqref{eq decomp odd} $I$ ranges over subsets of $[n]$ of size $\card{A} - 1, \card{A} - 3, \ldots$; hence \eqref{eq decomp odd} provides an $(\card{A} - 1)$-decomposition of $f$. Clearly, for such sets $I$ we have $\oddsupp(\vect{x}|_I) \in \mathcal{P}'_n(A)$.

To prove the theorem, it suffices to show that the following three statements hold:%
\begin{enumerate}[\rm (i)]
\item \label{i decomp odd}
the number of functions $f \colon A^n \to B$ that are determined by $\oddsupp$ is the same as the number of maps $\varphi \colon \mathcal{P}'_n(A)\to B$;

\item \label{ii decomp odd}
$g_\varphi$ is determined by $\oddsupp$ for every $\varphi \colon \mathcal{P}'_n(A) \to B$;

\item \label{iii decomp odd}
if $\varphi_1 \neq \varphi_2$ then $g_{\varphi_1} \neq g_{\varphi_2}$.
\end{enumerate}
The existence and uniqueness of the decomposition then follows by a simple counting argument: the functions $f \colon A^n \to B$ determined by $\oddsupp$ are in a one-to-one correspondence with the functions $g_\varphi$. (Alternatively, the existence could be proved by repeated applications of Theorem~\ref{thm oddsupp existence}, as explained above.)

Statement~\eqref{i decomp odd} is clear: the number of functions $f \colon A^n \to B$ that are determined by $\oddsupp$ is $\card{B}^{\card{\mathcal{P}'_n(A)}}$, the same as the number of maps $\varphi \colon \mathcal{P}'_n(A)\to B$.

To see that~\eqref{ii decomp odd} holds, observe that each $g_\varphi$ is a totally symmetric function. Hence, by Fact~\ref{fact oddsupp}, it suffices to prove that $g_\varphi(x_1, x_1, x_3, \dots, x_n)$ does not depend on $x_1$. Let $\vect{x} = (x_1, x_1, x_3, \ldots, x_n)$ and let $I$ be a set appearing in the summation in \eqref{eq decomp odd} such that $1 \in I$ and $2 \notin I$. Then $I' : =I \bigtriangleup \{1, 2\} = (I \setminus \{1\}) \cup \{2\}$ ($\bigtriangleup$ denotes the symmetric difference) appears as well, since it has the same cardinality as $I$. As $\oddsupp(\vect{x}|_I) = \oddsupp(\vect{x}|_{I'})$, we have $\Teta_\varphi(\vect{x}|_I) = \Teta_\varphi(\vect{x}|_{I'})$, thus these two summands will cancel each other. The remaining sets $I$ either contain both $1$ and $2$ or neither of them. In the first case, $\oddsupp(\vect{x}|_I) = \oddsupp(\vect{x}|_{I \setminus \{1, 2\}})$, and hence $\Teta_\varphi(\vect{x}|_I)$ does not depend on $x_1$, whereas in the second case $x_1$ does not appear in $\Teta_\varphi(\vect{x}|_I)$ at all. Thus $g_\varphi(x_1, x_1, x_3, \dots, x_n)$ does not depend on $x_1$, which shows that \eqref{ii decomp odd} holds.

To prove \eqref{iii decomp odd}, suppose on the contrary that there exist maps $\varphi_1, \varphi_2 \colon \mathcal{P}'_n(A) \to B$ such that $\varphi_1 \neq \varphi_2$ but $g_{\varphi_1} = g_{\varphi_2}$. Then for $\varphi = \varphi_1 + \varphi_2$ we have $g_\varphi = g_{\varphi_1} + g_{\varphi_2} \equiv 0$ by Fact~\ref{fact additive}, that is,
\begin{equation}
\sum_{i = t + 1}^{\left\lfloor \frac{n}{2} \right\rfloor} \sum_{\substack{I \subseteq [n] \\ \card{I} = n - 2i}} \binom{i - 1}{t} \Teta_\varphi(\vect{x}|_I) = 0
\label{eq decomp unique odd}
\end{equation}
for all $\vect{x} \in A^n$. Moreover, since $\varphi_1 \neq \varphi_2$, there exists an $S \in \mathcal{P}'_n(A)$ with $\varphi(S) \neq 0$. Let us choose $S$ to be minimal with respect to this property, i.e., $\varphi(S) \neq 0$, but $\varphi$ vanishes on all proper subsets of $S$.

Suppose first that $S$ is nonempty, say $S = \{s_1, \dots, s_{n - 2r}\}$. Since $n - \card{A} = 2t + 1$, we have that $t \leq r - 1$. Let us examine the left-hand side of \eqref{eq decomp unique odd} for
\[
\vect{x} := (\underbrace{s_1, \ldots, s_1}_{2r+1}, s_2, \dots, s_{n - 2r}) \in A^n.
\]
Observe that $\oddsupp(\vect{x}|_I) \subseteq S$. If $\oddsupp(\vect{x}|_I) \subset S$, then $\Teta_\varphi(\vect{x}|_I) = 0$ by the minimality of $S$. If $\oddsupp(\vect{x}|_I) = S$, then $\Teta_\varphi(\vect{x}|_I) = \varphi(S) \neq 0$. The latter is the case if and only if $I$ is a proper superset of $\{2r + 2, \dots, n\}$ of cardinality $n - 2i$ for some $i$. The number of sets $I \subseteq [n]$ with $\card{I} = n - 2i$ and $I \supset \{2r + 2, \dots, n\}$ is $\binom{2r + 1}{2i}$. Hence the left-hand side of \eqref{eq decomp unique odd} equals
\[
\sum_{i = t + 1}^r \binom{2r + 1}{2i} \binom{i - 1}{t} \varphi(S).
\]
Since $r \geq t + 1$, the coefficient $\sum_{i = t + 1}^r \binom{2r + 1}{2i} \binom{i - 1}{t}$ of $\varphi(S)$ is odd according to Theorem~\ref{thm binom SW} (for $m = 2r + 1$). Therefore, taking into account that $B$ is a Boolean group, we can conclude that the left-hand side of \eqref{eq decomp unique odd} is $\varphi(S) \neq 0$, which is a contradiction.

Suppose then that $S$ is empty. Choose $\vect{x} := (s_1, \dots, s_1)$ for an arbitrary $s_1 \in A$. Since $S \in \mathcal{P}'_n(A)$, $n$ is even and hence each $I$ occurring in \eqref{eq decomp unique odd} is of even cardinality. Whenever $\card{I}$ is even, $\oddsupp(\vect{x}|_I) = \emptyset = S$ and $\Teta_\varphi(\vect{x}|_I) = \varphi(S)$. Therefore, the left-hand side of \eqref{eq decomp unique odd} becomes
\[
\sum_{i = t + 1}^{\left\lfloor \frac{n}{2} \right\rfloor} \binom{n}{2i} \binom{i - 1}{t} \varphi(S),
\]
which equals $\varphi(S)$ by Theorem~\ref{thm binom SW} (for $m = n$). This yields the desired contradiction, and the proof of \eqref{iii decomp odd} is now complete.
\end{proof}

\begin{theorem}
\label{thm decomp even}
Let $f \colon A^n \to B$, where $B$ is a finite Boolean group, $A$ is a finite set, and $n - \card{A} = 2t > 0$. Then $f$ is determined by\/ $\oddsupp$ if and only if $f$ is of the form
\begin{equation}
f(\vect{x}) = \sum_{i = t + 1}^{\left\lfloor \frac{n}{2} \right\rfloor} \sum_{\substack{I \subseteq [n] \\ \card{I} = n - 2i}}\binom{i - 1}{t} \Teta_\varphi(\vect{x}|_I) + \sum_{k = t + 1}^{\left\lfloor \frac{n + 1}{2} \right\rfloor} \sum_{\substack{K \subseteq [n] \\ \card{K} = n - 2k + 1}} \binom{2k - 1}{2t} \Teta_\varphi(\vect{x}|_K).
\label{eq decomp even}
\end{equation}
for some map $\varphi \colon \mathcal{P}(A) \to B$ satisfying $\varphi(S) = \varphi(S \bigtriangleup \{0\})$ for every $S \in \mathcal{P}(A)$. Moreover, $\varphi$ is uniquely determined by $f$.
\end{theorem}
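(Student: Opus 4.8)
The plan is to follow exactly the architecture of the proof of Theorem~\ref{thm decomp odd}. Writing $g_\varphi$ for the right-hand side of \eqref{eq decomp even}, I would reduce the whole statement to three assertions: (i) the number of functions $f\colon A^n\to B$ determined by $\oddsupp$ equals the number of \emph{admissible} maps $\varphi$, that is, those $\varphi\colon\mathcal P(A)\to B$ with $\varphi(S)=\varphi(S\bigtriangleup\{0\})$ for all $S$; (ii) $g_\varphi$ is determined by $\oddsupp$ for every admissible $\varphi$; and (iii) if $\varphi_1\neq\varphi_2$ are admissible then $g_{\varphi_1}\neq g_{\varphi_2}$. Since $B$ is finite, (i)--(iii) together say that $\varphi\mapsto g_\varphi$ is a bijection from the admissible maps onto the functions determined by $\oddsupp$, which is precisely the existence-and-uniqueness claim.

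For (i): an admissible $\varphi$ is constant on each two-element orbit $\{S,S\bigtriangleup\{0\}\}$ of the involution $S\mapsto S\bigtriangleup\{0\}$, and there are $2^{\card A-1}$ such orbits; on the other hand $\card{\mathcal P'_n(A)}=2^{\card A-1}$, since $n\equiv\card A\pmod 2$ forces $\mathcal P'_n(A)$ to consist of all subsets of $A$ of one fixed parity. Hence both counts equal $\card B^{2^{\card A-1}}$. For (ii) I would invoke Fact~\ref{fact oddsupp}: each inner sum in \eqref{eq decomp even} is totally symmetric, and the check that $\minor{g_\varphi}{2}{1}$ does not depend on $x_1$ is verbatim the one in Theorem~\ref{thm decomp odd} — sets $I$ meeting $\{1,2\}$ in exactly one point cancel in pairs $I\leftrightarrow I\bigtriangleup\{1,2\}$ (here $B$ is Boolean), while sets containing both or neither of $1,2$ contribute terms independent of $x_1$. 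Note that (ii) needs no hypothesis on $\varphi$; admissibility is there only to cut the domain down to the right size and to drive (iii). One point to keep in mind is that the second sum evaluates $\varphi$ on sets of cardinality $\equiv n+1\pmod 2$, i.e.\ outside $\mathcal P'_n(A)$, which is exactly why the theorem takes $\varphi$ on all of $\mathcal P(A)$.

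The heart of the argument is (iii). Passing to $\varphi=\varphi_1+\varphi_2$, which is again admissible and nonzero, Fact~\ref{fact additive} gives $g_\varphi\equiv 0$, and I must derive a contradiction. I would choose $S$ minimal with respect to inclusion among the sets with $\varphi(S)\neq 0$; admissibility forces $0\notin S$ (otherwise its orbit partner $S\setminus\{0\}$ would be a smaller nonzero set), and $\varphi$ vanishes on all proper subsets of $S$. Put $m=\card S$, list $S=\{s_1,\dots,s_m\}$, and evaluate $g_\varphi$ at $\vect x=(s_1,\dots,s_m,0,\dots,0)$ with $n-m$ trailing zeros. Then $\oddsupp(\vect x|_I)=\{s_j:j\in I\cap[m]\}\cup(\{0\}\text{ iff }\card{I\setminus[m]}\text{ is odd})$, so by minimality a summand survives only when $[m]\subseteq I$; and since $\varphi(S)=\varphi(S\bigtriangleup\{0\})$, \emph{every} surviving summand contributes exactly $\varphi(S)$, regardless of the parity of $\card{I\setminus[m]}$. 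Grouping the $\binom{n-m}{p}$ sets $I$ with $\card{I\setminus[m]}=p$, the coefficient of $\varphi(S)$ becomes $\sum_{p}\binom{n-m}{p}\,c(m+p)$, where $c(\ell)$ is the binomial weight attached in \eqref{eq decomp even} to sets of size $\ell$ (from the first sum when $\ell\equiv n$, from the second when $\ell\equiv n+1$).

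The hard part will be showing that this coefficient is odd, so that $g_\varphi(\vect x)=\varphi(S)\neq 0$ in the Boolean group $B$. I would split the sum according to the parity of $p$ and reindex each half: one half becomes $\sum_{i=t+1}^{\lfloor(n-m)/2\rfloor}\binom{n-m}{2i}\binom{i-1}{t}$, which Theorem~\ref{thm binom SW} (with its parameter equal to $n-m$) evaluates to an odd number, because the leading $2$-power term is even and only the $(-1)^{t+1}$ survives; the other half becomes $\sum_{k}\binom{n-m}{2k-1}\binom{2k-1}{2t}$, which Theorem~\ref{thm binom trivi} evaluates to $\binom{n-m}{2t}2^{\,n-m-2t-1}$, an even number. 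Hence the total is odd. The bookkeeping I expect to be delicate is matching the summation ranges to those of the two identities and handling the boundary values — in particular the empty set $S=\varnothing$ (where every $I$ contributes) and the extreme $m=\card A-1$, where $n-m=2t+1$ makes the Theorem~\ref{thm binom trivi} half odd but simultaneously empties the Theorem~\ref{thm binom SW} half, so the total still comes out odd. With (iii) in hand, the counting bijection furnished by (i)--(ii) completes the proof.
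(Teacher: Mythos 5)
Your proposal is correct, and its overall skeleton is the paper's: the reduction to the three assertions (i)--(iii), the counting bijection, the pairing argument $I \leftrightarrow I \bigtriangleup \{1,2\}$ for (ii), and the use of Theorems~\ref{thm binom SW} and~\ref{thm binom trivi} to show that the coefficient of $\varphi(S)$ is odd in (iii). Where you genuinely depart from the paper is in how (iii) is executed. The paper picks $S \in \mathcal{P}'_n(A)$ of minimal \emph{cardinality} with $\varphi(S) \neq 0$, evaluates at the tuple $(s_1, \dots, s_1, s_2, \dots, s_{n-2r})$ with $s_1$ repeated $2r+1$ times, and is then driven into a three-case analysis ($r = t$; $r > t$ with $0 \in S$; $r > t$ with $0 \notin S$), each case requiring auxiliary arguments about $\varphi(T)$ versus $\varphi(T \bigtriangleup \{0\})$ for subsets $T \subseteq S$. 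You instead take $S$ \emph{inclusion}-minimal over all of $\mathcal{P}(A)$, use admissibility to force $0 \notin S$ (otherwise the orbit partner $S \setminus \{0\}$ would be a smaller nonzero set), and pad the test vector with zeros, $\vect{x} = (s_1, \dots, s_m, 0, \dots, 0)$; admissibility then makes every surviving summand equal $\varphi(S)$ irrespective of the parity of $\card{I \setminus [m]}$, so the three cases collapse into one uniform computation with parameter $m' = n - m \geq 2t+1$: the SW-half is odd and the trivi-half $\binom{m'}{2t}2^{m'-2t-1}$ is even when $m' \geq 2t+2$, while for the boundary $m' = 2t+1$ the SW-half is empty (its hypothesis $t \leq m'/2 - 1$ fails) and the trivi-half equals $2t+1$, odd --- exactly as you say. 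Both routes prove the same statement with the same key identities; yours buys a cleaner, essentially case-free uniqueness argument (the paper's case $r=t$, where $S = A$, is absorbed into your boundary case $m = \card{A} - 1$), whereas the paper's repetition trick works even when $0 \in S$ and so lets it organize everything directly around $\mathcal{P}'_n(A)$ without ever leaving it.
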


\begin{proof}
Let us note first that since $n > \card{A}$ and $n - \card{A}$ is even, $\mathcal{P}'_n(A)$ contains all subsets of $A$ whose complement has an even number of elements. The number of maps $\varphi \colon \mathcal{P}(A) \to B$ satisfying $\varphi(S) = \varphi(S \bigtriangleup \{0\})$ for every $S \in \mathcal{P}(A)$ is $\card{B}^{\card{\mathcal{P}'_n(A)}}$, since $\varphi|_{\mathcal{P}'_n(A)}$ can be chosen arbitrarily, and this uniquely determines $\varphi|_{\mathcal{P}(A) \setminus \mathcal{P}'_n(A)}$. The number of functions $f \colon A^n \to B$ that are determined by $\oddsupp$ is $\card{B}^{\card{\mathcal{P}'_n(A)}}$ as well, and we can use the same counting argument as in the proof of Theorem~\ref{thm decomp odd}. The fact that the right-hand side of \eqref{eq decomp even} is determined by $\oddsupp$ can be proven in a similar way, and for the uniqueness it suffices to prove that if
\begin{equation}
\sum_{i = t + 1}^{\left\lfloor \frac{n}{2} \right\rfloor } \sum_{\substack{I \subseteq [n] \\ \card{I} = n - 2i}}\binom{i - 1}{t} \Teta_\varphi(\vect{x}|_I)
+ \sum_{k = t + 1}^{\left\lfloor \frac{n+1}{2} \right\rfloor} \sum_{\substack{K \subseteq [n] \\ \card{K} = n - 2k + 1}} \binom{2k - 1}{2t} \Teta_\varphi(\vect{x}|_K)
= 0
\label{eq decomp unique even}
\end{equation}
for all $\vect{x} \in A^n$, then $\varphi|_{\mathcal{P}'_n(A)}$ is identically $0$.

Suppose, for the sake of contradiction, that there exists an $S \in \mathcal{P}'_n(A)$ such that $\varphi(S) \neq 0$, and let $n - 2r$ be the cardinality of the smallest such $S$. If $r = t$, then $\varphi(A) = \varphi(A \setminus \{0\}) \neq 0$, and $\varphi$ is zero on all other subsets of $A$. Let $A = \{0, a_1, \dots, a_\ell\}$, where $\ell = n - 2t - 1$, and let $\vect{x} = (0, \dots, 0, a_1, \dots, a_\ell) \in A^n$, where the number of $0$'s is $2t + 1$. Then, for any set $I$ appearing in the first summation of \eqref{eq decomp unique even}, we have $A \setminus \{0\} \nsubseteq \oddsupp(\vect{x}|_I)$; hence $\Teta_\varphi(\vect{x}|_I) = 0$. Similarly, $\Teta_\varphi(\vect{x}|_K) = 0$ for all sets $K$ appearing in \eqref{eq decomp unique even}, except for $K = \{2t + 2, \dots, n\}$, where $\Teta_\varphi(\vect{x}|_K) = \varphi(A \setminus \{0\})$. Thus the left-hand side of \eqref{eq decomp unique even} equals $\varphi(A \setminus \{0\}) \neq 0$, contrary to our assumption.

Let us now consider the case $r > t$, and let us suppose first that there exists a set $S \in \mathcal{P}'_n(A)$ of cardinality $n - 2r$ such that $\varphi(S) \neq 0$ and $0 \in S$, say $S = \{s_1, \dots, s_{n - 2r}\}$ with $s_1 = 0$. Let $T$ be a subset of $S$. By the minimality of $\card{S}$, if $T \in \mathcal{P}'_n(A)$ then we have $\varphi(T) \neq 0$ if and only if $T = S$. Similarly, if $T \notin \mathcal{P}'_n(A)$ then we have $\varphi(T) \neq 0$ if and only if $T = S \setminus \{0\}$. (Indeed, if $T \neq S \setminus \{0\}$, then $T \bigtriangleup \{0\} \in \mathcal{P}'_n(A)$ is a proper subset of $S$. Hence $\varphi(T) = \varphi(T \bigtriangleup \{0\}) = 0$.)

Let us examine the left-hand side of~\eqref{eq decomp unique even} for
\[
\mathbf{x} := (\underbrace{s_1, \dots, s_1}_{2r + 1}, s_2, \dots, s_{n - 2r}) \in A^n.
\]
The same argument as in the proof of Theorem~\ref{thm decomp odd} shows that the first sum of~\eqref{eq decomp unique even} equals
\[
\sum_{i = t + 1}^r \binom{2r + 1}{2i} \binom{i - 1}{t} \varphi(S),
\]
which is $\varphi(S)$ by Theorem~\ref{thm binom SW}, since $r \geq t + 1$. If $K$ is a set of size $n - 2k + 1$ appearing in the second sum of~\eqref{eq decomp unique even}, then $\Teta_\varphi(\vect{x}|_K) = \varphi(S \setminus \{0\}) = \varphi(S)$ if $K \supseteq \{2r + 2, \dots, n\}$, and $\Teta_\varphi(\vect{x}|_K) = 0$ otherwise. The number of such sets $K$ is $\binom{2r + 1}{2k - 1}$, thus the second sum on the left-hand side of \eqref{eq decomp unique even} equals
\[
\sum_{k = t + 1}^{r + 1} \binom{2r + 1}{2k - 1} \binom{2k - 1}{2t} \varphi(S).
\]
By Theorem~\ref{thm binom trivi}, the coefficient of $\varphi(S)$ here is $\binom{2r + 1}{2t} 2^{2r - 2t}$, which is even since $r > t$. Thus the left-hand side of \eqref{eq decomp unique even} reduces to $\varphi(S)$, contradicting our assumption.

In the remaining case we have $r > t$ and for all $S \in \mathcal{P}'_n(A)$ of cardinality $n - 2r$ we have $0 \notin S$ whenever $\varphi(S) \neq 0$. Let $S = \{s_1, \dots, s_{n - 2r}\}$ be such a set, and let $T \subseteq S$. If $T \in \mathcal{P}'_n(A)$, then we have $\varphi(T) \neq 0$ if and only if $T = S$ by the minimality of $\card{S}$. Similarly, if $T \notin \mathcal{P}'_n(A)$, then we have $\varphi(T) = 0$. (Indeed, if $T \notin \mathcal{P}'_n(A)$ then $T \cup \{0\} = T \bigtriangleup \{0\} \in \mathcal{P}'_n(A)$ and $\card{T \bigtriangleup \{0\}} \leq \card{S}$. On the other hand, if $\varphi(T \bigtriangleup \{0\}) = \varphi(T) \neq 0$ then $\card{T \bigtriangleup \{0\}} \geq \card{S}$ by the minimality of $\card{S}$. Thus we have $\card{T \bigtriangleup \{0\}} = \card{S} = n - 2r$, hence $T \bigtriangleup \{0\}$ is a set in $\mathcal{P}'_n(A)$ with cardinality $n - 2r$ such that $\varphi(T \bigtriangleup \{0\}) \neq 0$ and $0 \in T \bigtriangleup \{0\}$, and then replacing $S$ by $T \bigtriangleup \{0\}$ we come back to the previous case.)

Let us choose $\mathbf{x} := (s_1, \dots, s_1, s_2, \dots, s_{n - 2r}) \in A^n$ as before, and examine the summands in \eqref{eq decomp unique even}. For each $K$ appearing in the second sum, $\oddsupp(\vect{x}|_K) \subseteq S$ and $\oddsupp(\vect{x}|_K) \notin \mathcal{P}'_n(A)$, thus $\Teta_\varphi(\vect{x}|_K) = 0$. For each $I$ appearing in the first sum, we have $\Teta_\varphi(\vect{x}|_I) = \varphi(S) \neq 0$ if $I$ is a proper superset of $\{2r + 2, \dots, n\}$; otherwise $\oddsupp(\vect{x}|_I) \subset S$, and so $\Teta_\varphi(\vect{x}|_I) = 0$. Therefore, using Theorem~\ref{thm binom SW} as before, we can conclude that the left-hand side of~\eqref{eq decomp unique even} equals $\varphi(S)$, and this contradiction finishes the proof of the theorem.
\end{proof}

\begin{remark}
\label{rem infinite B}
Theorems~\ref{thm decomp odd} and \ref{thm decomp even} still hold for infinite Boolean groups $B$. To see this, let $f \colon A^n \to B$ be a function that is determined by $\oddsupp$, where $A$ is a finite set and $B$ is a possibly infinite Boolean group, and let $R \subseteq B$ be the range of $f$. Since $R$ is finite, the subgroup $[R] \leq B$ generated by $R$ is also finite. (The free Boolean group on $r$ generators has cardinality $2^r$.) Applying Theorems~\ref{thm decomp odd} and \ref{thm decomp even} to $f \colon A^n \to [R]$, we obtain the desired decomposition of $f$. To show the uniqueness, suppose that $\varphi_1, \varphi_2 \colon \mathcal{P}(A) \to B$ both yield the function $f$. Then we can replace $B$ by its subgroup generated by the union of the ranges of $\varphi_1$ and $\varphi_2$, and apply the uniqueness parts of Theorems~\ref{thm decomp odd} and \ref{thm decomp even}.
\end{remark}


\section{Illustration: operations over the three-element set}
\label{sect Z3}

We saw in Theorem~\ref{BooleanGap} that a Boolean function of essential arity at least $4$ has arity gap $2$ if and only if it is a sum of essentially at most unary functions. Alternatively, this fact follows from the results of the previous section together with Willard's Theorem~\ref{thm:gap}. More generally, Theorems~\ref{thm decomp odd} and \ref{thm decomp even} can be applied to describe polynomial functions over finite fields of characteristic $2$ with arity gap $2$. In~\cite{Gappol} we provided a simpler and more explicit description of such polynomial functions. In this section we show how Theorems~\ref{thm decomp odd} and \ref{thm decomp even} can be used to describe functions $f \colon \mathbb{Z}_3^n \to \mathbb{Z}_3$ of arity at least $4$ with $\gap f = 2$. Since $\mathbb{Z}_3$ is not a Boolean group, we cannot apply these theorems directly. First we need to embed $\mathbb{Z}_3$ into a Boolean group. To this extent, let $A := \mathbb{Z}_3 = \{0, 1, 2\}$ with the usual field operations $+$ and $\cdot$, and $B := \mathcal{P}(A)$ with the symmetric difference operation $\oplus$. We use the notation $\oplus$ instead of the more common $\bigtriangleup$ in order to emphasize that this is a Boolean group operation on $B$ (which was denoted by $+$ before). The neutral element of $(A; +)$ is $0$, and the neutral element of $(B; \oplus)$ is the empty set $\emptyset$. We identify the elements of $A$ with the corresponding one-element sets, i.e., we simply write $a$ instead of $\{a\}$ for $a \in A$. In this way, $A$ becomes a subset (but, of course, not a subgroup) of $B$.

Let $f \colon A^n \to B$, where $n \geq 4$ is even. Then we have $n = 2t + 4$ in Theorem~\ref{thm decomp odd}, and the summation in \eqref{eq decomp odd} runs over the subsets of $[n]$ of size $2$ (for $i = t + 1$) and of size $0$ (for $i = t + 2$). The corresponding coefficients $\binom{i - 1}{t}$ are $\binom{t}{t} = 1$ and $\binom{t + 1}{t} = t + 1$, respectively. Thus $\binom{i - 1}{t} \Teta_\varphi(\vect{x}|_I) = \Teta_\varphi(\vect{x}|_I)$ whenever $\card{I} = 2$ or $I = \emptyset$ and $t$ is even (i.e., $n$ is divisible by $4$); on the other hand, if $I = \emptyset$ and $t$ is odd, then $\binom{i-1}{t} \Teta_\varphi(\vect{x}|_I) = 0$. Therefore, \eqref{eq decomp odd} takes one of the following two forms, depending on the residue of $n$ modulo $4$ (the summation indices $i$ and $j$ always run from $1$ to $n$, unless otherwise indicated):
\begin{align*}
f(\vect{x}) &= \bigoplus_{i < j} \varphi(\oddsupp(x_i, x_j)) \oplus \varphi(\emptyset) && \text{if $n \equiv 0 \pmod{4}$,} \\
f(\vect{x}) &= \bigoplus_{i < j} \varphi(\oddsupp(x_i, x_j)) && \text{if $n \equiv 2 \pmod{4}$.}
\end{align*}
(Note that $\varphi(\oddsupp(x_i, x_j)) = \varphi(\{x_i, x_j\})$ if $x_i \neq x_j$, and $\varphi(\oddsupp(x_i, x_j)) = \varphi(\emptyset)$ if $x_i = x_j$.)

If $n$ is odd, then we can apply Theorem~\ref{thm decomp even}. In this case we have $n = 2t + 3$, and in the first summation of \eqref{eq decomp even} $I$ is a one-element set ($i = t + 1$) and the corresponding coefficient is $\binom{i - 1}{t} = \binom{t}{t} = 1$. In the second summation, $K$ is either a two-element set ($k = t + 1$) or the empty set ($k = t + 2$). The corresponding coefficients $\binom{2k - 1}{2t}$ are $\binom{2t + 1}{2t} = 2t + 1$ and $\binom{2t + 3}{2t} = \frac{(2t + 3)(2t + 1)(t + 1)}{3} \equiv t + 1 \pmod{2}$. Thus, \eqref{eq decomp even} takes one of the following two forms:
\begin{align*}
f(\vect{x}) &= \bigoplus_{i < j} \varphi(\oddsupp(x_i, x_j)) \oplus \bigoplus_i \varphi(\{x_i\}) && \text{if $n \equiv 1 \pmod{4}$,} \\
f(\vect{x}) &= \bigoplus_{i < j} \varphi(\oddsupp(x_i, x_j)) \oplus \bigoplus_i \varphi(\{x_i\}) \oplus \varphi(\emptyset) && \text{if $n \equiv 3 \pmod{4}$.}
\end{align*}
(Note that $\varphi(\oddsupp(x_i)) = \varphi(\{x_i\})$.)

The above formulas are valid for any function $f \colon A^n \to B$, but we are interested only in functions whose range lies within $A$, i.e., whose values are one-element sets in $B$. In this case, we can give more concrete expressions for the above decompositions.

\begin{theorem}
\label{thm GF(3)}
Let $f \colon \mathbb{Z}_3^n \to \mathbb{Z}_3$ be a function of arity at least\/ $4$. Then\/ $\gap f = 2$ if and only if there exists a unary polynomial $p = ax^2 + bx + c \in \mathbb{Z}_3[x]$ and a constant $d \in \mathbb{Z}_3$, which are uniquely determined by $f$, such that
\begin{align*}
f(\vect{x}) &= \bigoplus_{i < j} \bigl( (x_i - x_j)^2 p(x_i + x_j) + d \bigr) \oplus d && \text{if $n \equiv 0 \pmod{4}$,} \\
f(\vect{x}) &= \bigoplus_{i < j} \bigl( (x_i - x_j)^2 p(x_i + x_j) + d \bigr) \oplus \bigoplus_i \bigl( p(x_i) + d \bigr) && \text{if $n \equiv 1 \pmod{4}$,} \\
f(\vect{x}) &= \bigoplus_{i < j} \bigl( (x_i - x_j)^2 p(x_i + x_j) + d \bigr) && \text{if $n \equiv 2 \pmod{4}$,} \\
f(\vect{x}) &= \bigoplus_{i < j} \bigl( (x_i - x_j)^2 p(x_i + x_j) + d \bigr) \oplus \bigoplus_i \bigl( p(x_i) + d \bigr) \oplus d && \text{if $n \equiv 3 \pmod{4}$.}
\end{align*}
Otherwise we have\/ $\gap f=1$.
\end{theorem}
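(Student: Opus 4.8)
The plan is to combine Willard's Theorem~\ref{thm:gap} with a direct evaluation of the four displayed right-hand sides, and to finish by a counting argument. Since the arity gap refers only to essential variables, I may assume $f$ depends on all of its $n\ge 4$ variables; as $\card{\mathbb{Z}_3}=3$ and $n>\max(3,3)$, Theorem~\ref{thm:gap} then says that $\gap f=2$ precisely when $f$ is determined by $\oddsupp$, and $\gap f=1$ otherwise. So the entire content of the theorem is that the $\mathbb{Z}_3$-valued functions determined by $\oddsupp$ are exactly those of the displayed polynomial form, with $(p,d)$ unique. Accordingly I would fix the residue of $n$ modulo $4$, write $f_{p,d}$ for the function defined by the corresponding right-hand side (read inside $B=\mathcal{P}(\mathbb{Z}_3)$, each field value $v$ being the singleton $\{v\}$), and prove: (a) $f_{p,d}$ is determined by $\oddsupp$; (b) $f_{p,d}$ is $\mathbb{Z}_3$-valued; (c) $(p,d)\mapsto f_{p,d}$ is a bijection onto all $\mathbb{Z}_3$-valued functions determined by $\oddsupp$.

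Step (a) is the one genuinely clean step. Each $f_{p,d}$ is visibly totally symmetric, so by Fact~\ref{fact oddsupp} it suffices to check that $\minor{(f_{p,d})}{2}{1}$ does not depend on $x_1$. Setting $x_2=x_1$, the summands touching indices $1$ and $2$ pair off under $\oplus$: for each $j\ge 3$ the terms indexed by $\{1,j\}$ and $\{2,j\}$ become equal singletons and cancel; when $n$ is odd, the singleton terms $\{p(x_1)+d\}$ and $\{p(x_2)+d\}$ cancel likewise; and the $\{1,2\}$-term collapses to the constant $\{d\}$ since $(x_1-x_2)^2=0$. What survives involves only $x_3,\dots,x_n$, so $x_1$ is inessential. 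Hence $f_{p,d}=\Teta_{\tilde\varphi}$, where $\tilde\varphi(T)=f_{p,d}(\vect{x})$ for any $\vect{x}$ with $\oddsupp(\vect{x})=T$.

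The key simplification is that, once (a) is known, $f_{p,d}(\vect{x})$ depends only on $\oddsupp(\vect{x})$, so it suffices to evaluate it at a single representative of each of the four classes in $\mathcal{P}'_n(\mathbb{Z}_3)$ (sizes $1,3$ for odd $n$; sizes $0,2$ for even $n$). I would compute these values in the coordinates $\pi_c(T):=[c\in T]\in\mathbb{Z}_2$ of $B\cong\mathbb{Z}_2^3$, counting, for a representative with $n_a$ coordinates equal to $a$, how many of the terms $p(x_i)+d$, $(x_i-x_j)^2 p(x_i+x_j)+d$, and the constant $d$ equal each $c$, and reducing the resulting tallies ($n_a$, $n_an_b$, $\binom{n_a}{2}$, $\binom{n}{2}$) modulo $2$. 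Carried out for each residue of $n$ modulo $4$, this gives uniformly $\tilde\varphi(\{a\})=\{p(a)+d\}$ and $\tilde\varphi(\{0,1,2\})=\{d\}$ for odd $n$, and $\tilde\varphi(\{a,b\})=\{p(a+b)+d\}$ and $\tilde\varphi(\emptyset)=\{d\}$ for even $n$; in particular every value is a singleton, proving (b). Keeping track of the parity of $\binom{n_a}{2}$ and of the standalone $d$-term, which is exactly where the four residues differ, is the main technical obstacle, but it is entirely elementary once the reduction to representatives is in place.

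Finally I would read off (c) by counting. Since $\card{\mathcal{P}'_n(\mathbb{Z}_3)}=4$, there are exactly $3^4$ functions $\mathbb{Z}_3^n\to\mathbb{Z}_3$ determined by $\oddsupp$, matching the $27\cdot 3=3^4$ pairs $(p,d)$. In each case the formulas above exhibit $(p(0),p(1),p(2),d)\mapsto\tilde\varphi|_{\mathcal{P}'_n(\mathbb{Z}_3)}$ as a triangular, hence invertible, map $\mathbb{Z}_3^4\to\mathbb{Z}_3^4$ (recover $d$ from the $\emptyset$- or $\{0,1,2\}$-value, then each $p(a)$), while $(a,b,c)\mapsto(p(0),p(1),p(2))$ is invertible over $\mathbb{Z}_3$ by interpolation at $0,1,2$. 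Thus $(p,d)\mapsto f_{p,d}$ is a bijection onto the $\mathbb{Z}_3$-valued functions determined by $\oddsupp$, yielding both existence and uniqueness of $(p,d)$. Together with the Willard reduction of the first paragraph this gives $\gap f=2$ iff $f$ has one of the displayed forms, and $\gap f=1$ otherwise.
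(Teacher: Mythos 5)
Your proposal is correct and takes essentially the same route as the paper's own proof: both reduce via Willard's Theorem~\ref{thm:gap} to characterizing the $\mathbb{Z}_3$-valued functions determined by $\oddsupp$, verify directly (using Fact~\ref{fact oddsupp} and the same pairwise cancellation under identifying $x_2$ with $x_1$) that each displayed expression is determined by $\oddsupp$, evaluate the induced map $\varphi$ on the four elements of $\mathcal{P}'_n(\mathbb{Z}_3)$ at representative tuples, and finish by the identical $3^4 = 3^4$ counting argument. The only differences are presentational: you treat all four residues of $n$ modulo $4$ uniformly and invert the parameter map $(p,d)\mapsto\varphi$ by triangularity plus interpolation at $0,1,2$, whereas the paper works out $n\equiv 3\pmod 4$ in detail and checks instead that the explicit linear map $L(a,b,c,d)=(c+d,\,a+b+c+d,\,a+2b+c+d,\,d)$ has determinant $1$.
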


\begin{proof}
Let $A := \mathbb{Z}_3$ and $B := \mathcal{P}(\mathbb{Z}_3)$ as explained above. We work out the details only for the case $n \equiv 3 \pmod{4}$, the other cases are similar. First let us consider the function
\[
f_1(\vect{x}) = \bigoplus_i \bigl( p(x_i) + d \bigr).
\]
It is clear that this function is totally symmetric, and $f_1(x_1, x_1, x_3, \dots, x_n)$ does not depend on $x_1$, since
\[
f_1(x_1, x_1, x_3, \dots, x_n)
= \bigl( p(x_1)+d \bigr) \oplus \bigl( p(x_1) + d \bigr) \oplus \bigoplus_{i = 3}^n \bigl( p(x_i) + d \bigr)
= \bigoplus_{i = 3}^n \bigl( p(x_i)+d \bigr).
\]
Therefore, $f_1$ is determined by $\oddsupp$ by Fact~\ref{fact oddsupp}. Hence $f_1(\vect{x}) = \varphi_{1}(\oddsupp(\vect{x}))$ for some map $\varphi_1 \colon \mathcal{P}'_n(A) \to B$. Observe that $\mathcal{P}'_n(A) = \{\{0\}, \{1\}, \{2\}, \{0, 1, 2\}\}$. Thus, in order to determine $\varphi_1$, it suffices to compute the following four values of $f_1$:
\begin{align*}
\varphi_1(\{0\})
&= f_1(0, \dots, 0)
= \bigoplus_{i = 1}^n \bigl( p(0) + d \bigr)
= p(0) + d
= c + d, \\
\varphi_1(\{1\})
&= f_1(1, \dots, 1)
= \bigoplus_{i = 1}^n \bigl( p(1) + d \bigr)
= p(1) + d
= a + b + c + d, \\
\varphi_1(\{2\})
&= f_1(2, \dots, 2)
= \bigoplus_{i = 1}^n \bigl( p(2) + d \bigr)
= p(2) + d
= a + 2b + c + d, \\
\varphi_1(\{0, 1, 2\})
&= f_1(0, \dots, 0, 1, 2)
= \bigoplus_{i = 1}^{n - 2} \bigl( p(0)+d \bigr) \oplus \bigl( p(1) + d \bigr) \oplus \bigl( p(2) + d \bigr) \\
&= \bigl( p(0) + d \bigr) \oplus \bigl( p(1) + d \bigr) \oplus \bigl( p(2) + d \bigr) \\
&= (c + d) \oplus (a + b + c + d) \oplus (a + 2b + c + d).
\end{align*}

We now analyze the function
\[
f_2(\vect{x}) = \bigoplus_{i < j} \bigl( (x_i - x_j)^2 p(x_i + x_j) + d \bigr)
\]
in a similar manner. Examining $f_2(x_1, x_1, x_3, \dots, x_n)$ we can see that the summands corresponding to $i = 1, j \geq 3$ cancel the summands corresponding to $i = 2, j \geq 3$, while the summand corresponding to $i = 1$, $j = 2$ is $(x_1 - x_1)^2 p(x_1 + x_1) + d = d$. Hence
\[
f_2(x_1, x_1, x_3, \dots, x_n)
= d \oplus \bigoplus_{3 \leq i < j} \bigl( (x_i - x_j)^2 p(x_i + x_j) + d \bigr),
\]
which clearly does not depend on $x_1$. Since $f_2$ is totally symmetric, we can conclude that $f_2$ is determined by $\oddsupp$. Therefore, there is a map $\varphi_2 \colon \mathcal{P}'_n(A) \to B$ such that $f_2(\vect{x}) = \varphi_2(\oddsupp(\vect{x}))$. For any $a \in A$ we have
\[
\varphi_2(\{a\})
= f_2(a, \dots, a)
= \bigoplus_{i < j} \bigl( (a - a)^2 p(a + a) + d \bigr)
= \binom{n}{2} d
= d,
\]
where the last equality holds, because $\binom{n}{2}$ is an odd number by the assumption that $n \equiv 3 \pmod{4}$.
To find $\varphi_2(\{0, 1, 2\})$, we can proceed as follows:
\begin{align*}
\varphi_2(\{0, 1, 2\})
&= f_2(0, \dots, 0, 1, 2) \\
&= \bigoplus_{i < j \leq n - 2} \bigl( (0 - 0)^2 p(0 + 0) + d \bigr)
\\
&\phantom{{} = {}} \qquad \oplus \bigoplus_{i = 1}^{n - 2} \bigl( (0 - 1)^2 p(0 + 1) + d \bigr) \oplus \bigoplus_{i = 1}^{n - 2} \bigl( (0 - 2)^2 p(0 + 2) + d \bigr)
\\
&\phantom{{} = {}} \qquad \oplus \bigl( (1 - 2)^2 p(1 + 2) + d \bigr)
\\
&= (a + b + c + d) \oplus (a + 2b + c + d) \oplus (c + d).
\end{align*}
(Here we made use of the fact that $\binom{n - 2}{2}$ is even and $n - 2$ is odd.)

The expression given for $f$ in the theorem is $f_1(\vect{x}) \oplus f_2(\vect{x}) \oplus d$, and from the above calculations it follows that this function is determined by $\oddsupp$, namely, $f_1(\vect{x}) \oplus f_2(\vect{x}) \oplus d = \varphi(\oddsupp(\vect{x}))$, where
\begin{align*}
\varphi(\{0\}) &= \varphi_1(\{0\}) \oplus \varphi_2(\{0\}) \oplus d = (c + d) \oplus d \oplus d = c + d, \\
\varphi(\{1\}) &= \varphi_1(\{1\}) \oplus \varphi_2(\{1\}) \oplus d = (a + b + c + d) \oplus d \oplus d = a + b + c + d, \\
\varphi(\{2\}) &= \varphi_1(\{2\}) \oplus \varphi_2(\{2\}) \oplus d = (a + 2b + c + d) \oplus d \oplus d = a + 2b + c + d, \\
\varphi(\{0, 1, 2\}) &= \varphi_1(\{0, 1, 2\}) \oplus \varphi_2(\{0, 1, 2\}) \oplus d \\
&= (c + d) \oplus (a + b + c + d) \oplus(a + 2b + c + d) \\
&\phantom{{} = (c + d)} \oplus (a + b + c + d) \oplus (a + 2b + c + d) \oplus (c + d) \oplus d = d.
\end{align*}
Observe that the range of $\varphi$ is a subset of $A$. Hence $f_1(\vect{x}) \oplus f_2(\vect{x}) \oplus d$ is a function from $A^n$ to $A$.

Let us consider the linear transformation%
\[
L \colon \mathbb{Z}_3^{4} \to \mathbb{Z}_3^4, \quad
(a, b, c, d) \mapsto (c + d, a + b + c + d, a + 2b + c + d, d).
\]
The determinant of $L$ is $1$; hence $L$ is a bijection. This means that the maps $\varphi \colon \mathcal{P}'_n(A) \to B$ that are of the above form are in a one-to-one correspondence with the $4$-tuples over $A$, i.e., there are $3^4 = 81$ such maps. The number of functions $f \colon A^n \to A$ that are determined by $\oddsupp$ is also $81$. Hence we can conclude by a simple counting argument that for any such $f$ there exists a unique tuple $(a, b, c, d)\in A^4$ such that $f(\vect{x}) = f_1(\vect{x}) \oplus f_2(\vect{x}) \oplus d$.
\end{proof}

Let us observe that when computing the value of a function of the form given in Theorem~\ref{thm GF(3)}, we do not have to ``leave'' $\mathbb{Z}_3$: using the fact that $\oplus$ is commutative and associative and it satisfies $u \oplus u \oplus v = v$ for any $u, v \in \mathbb{Z}_3$, we can always perform the calculations in such a way that we work only with singleton elements of $B$. It is not even necessary to know that $B$ is the power set of $\mathbb{Z}_3$, it could be any Boolean group that contains $\mathbb{Z}_3$ as a subset. To illustrate this point, let us compute $f(0, 0, 1, 2)$ for the function
\[
f(x_1, x_2, x_3, x_4)
= \bigoplus_{i < j} \bigl( (x_i - x_j)^2 p(x_i + x_j) + d \bigr) \oplus d
\]
that corresponds to the case $n = 4$ with $a = 1$, $b = c = d = 2$ in Theorem~\ref{thm GF(3)}:
\[
f(0, 0, 1, 2) = 2 \oplus 1 \oplus 0 \oplus 1 \oplus 0 \oplus 1 \oplus 2 =(0 \oplus 0) \oplus (1 \oplus 1) \oplus (2 \oplus 2) \oplus 1 = 1.
\]


\section*{Acknowledgments}

The third named author acknowledges that the present project is supported by the \hbox{T\'{A}MOP-4.2.1/B-09/1/KONV-2010-0005} program of National Development Agency of Hungary, by the Hungarian National Foundation for Scientific Research under grants no.\ K77409 and K83219, by the National Research Fund of Luxembourg, and cofunded under the Marie Curie Actions of the European Commission \hbox{(FP7-COFUND).}


\end{document}